\documentclass[11pt]{article}
\usepackage[utf8]{inputenc}
\usepackage[T1]{fontenc}
\usepackage[noBBpl]{mathpazo}
\usepackage{microtype}
\usepackage{setspace}
\usepackage{needspace}

\usepackage{tikz}
\usetikzlibrary {decorations.pathmorphing, decorations.pathreplacing, decorations.shapes, patterns, through, math, calc, positioning, fit, shapes.misc, hobby}
\usepackage{caption, subcaption}
\usepackage{tkz-euclide}
\tikzset{snake it/.style={decorate, decoration=snake}}

\usepackage[left = 2.5cm, right = 2.5cm, top = 2.5cm, bottom = 2.5cm, headsep = 12pt]{geometry}
\usepackage{parskip}

\usepackage[dvipsnames]{xcolor}
\definecolor{cblue}{RGB}{0,70,140}
\definecolor{cgreen}{RGB}{100,140,0}
\definecolor{cred}{RGB}{190,10,50}

\usepackage{amsmath}
\usepackage{amsthm}
\usepackage{amssymb}
\usepackage{xcolor}
\usepackage{graphicx}
\usepackage{float}
\usepackage{enumitem}
\usepackage{thmtools}
\usepackage{lipsum}
\usepackage{mathtools}
\usepackage{braket}

\DeclareFontFamily{U} {cmr}{}

\DeclareFontShape{U}{cmr}{m}{n}{
	<-6> cmr5
	<6-7> cmr6
	<7-8> cmr7
	<8-9> cmr8
	<9-10> cmr9
	<10-12> cmr10
	<12-> cmr12}{}

\DeclareSymbolFont{Xcmr} {U} {cmr}{m}{n}
\DeclareSymbolFont{cmletters}{OML}{cmm}{m}{it}
\DeclareMathSymbol{\cmchi}{\mathord}{cmletters}{"1F}

\renewcommand{\chi}{\cmchi}
\newcommand{\Vberg}{V_{RGB}}
\newcommand{\Vr}{V_{RDW}}
\newcommand{\Vb}{V_{BWY}}
\newcommand{\Vg}{V_{GDY}}

\newcommand{\abund}{\bigl\lfloor{(n+1)}/{4} \bigr\rfloor}

\usepackage[hidelinks,colorlinks,pagebackref]{hyperref} % 
\hypersetup{citecolor=green!70!black,linkcolor=blue!70!black,urlcolor=blue!70!black}
\usepackage[capitalise,nameinlink,noabbrev,compress]{cleveref}

\DeclareFontFamily{U}{matha}{\hyphenchar\font45}
\DeclareFontShape{U}{matha}{m}{n}{
	<5> <6> <7> <8> <9> <10> gen * matha
	<10.95> matha10 <12> <14.4> <17.28> <20.74> <24.88> matha12
}{}
\DeclareSymbolFont{matha}{U}{matha}{m}{n}

\DeclareMathSymbol{\specialuparrow}{\mathrel}{matha}{"D2}
\DeclareMathSymbol{\specialrightarrow}{\mathrel}{matha}{"D1}

\renewcommand*{\backref}[1]{}
\renewcommand*{\backrefalt}[4]{
	\ifcase #1 Not cited.%
	\or $\specialuparrow$#2%
	\else $\specialuparrow$#2%
	\fi%
}

\makeatletter
\renewcommand*{\eqref}[1]{%
  \hyperref[{#1}]{\textup{\tagform@{\ref*{#1}}}}%
}
\makeatother

\newcommand{\affiliation}{\footnote}
\newcommand{\affiliationmark}[1][\value{footnote}-1]{\footnotemark[\numexpr#1+1\relax]}
\newcommand{\define}[1]{\textcolor{Maroon}{\emph{#1}}}

\renewcommand{\phi}{\varphi}

\let\emptyset\varnothing

\theoremstyle{plain}
\newtheorem{thm}{Theorem}[section]
\newtheorem{claim}{Claim}[thm]
\newtheorem{conjecture}[thm]{Conjecture}
\newtheorem{lem}[thm]{Lemma}

\newtheorem{obs}[thm]{Observation}

\newtheorem{prob}[thm]{Problem}
\theoremstyle{definition}
\newtheorem{defn}[thm]{Definition}



\makeatletter
\newcommand{\namelabel}[1]{%
  \phantomsection
  \renewcommand{\@currentlabel}{#1}% Update the label text/name
  \label{#1}% Set the label
}
\makeatother

\crefalias{lemmai}{lemma}

\newcommand{\oldqed}{}
\def\endofClaim{\hfill\scalebox{.6}{$\Box$}}
\newenvironment{claimproof}[1][Proof]{
  \renewcommand{\oldqed}{\qedsymbol}
  \renewcommand{\qedsymbol}{\endofClaim}
  \begin{proof}[#1]}
  {\end{proof}
  \renewcommand{\qedsymbol}{\oldqed}} 

\setlist[enumerate]{itemindent=0.5\parindent+\labelwidth, 
leftmargin =*, labelindent = 0.5 \parindent}
\newcommand{\itmarab}[1]{\mbox{\hspace{0.05em}\rm 
({#1}\:\!\arabic{*}\hspace{0.05em})}}

\usepackage{xparse}
\NewDocumentCommand{\drawroundedtriangle}{O{2} O{blue} m m m}{%
  \coordinate (CENTROID) at ($1/3*(#3) + 1/3*(#4) + 1/3*(#5)$);

  \coordinate (Aout) at ($(CENTROID)!#1!(#3)$);
  \coordinate (Bout) at ($(CENTROID)!#1!(#4)$);
  \coordinate (Cout) at ($(CENTROID)!#1!(#5)$);

  \draw[rounded corners=12pt, line width=1pt,
        fill=#2!10, draw=#2!70!black, opacity=0.7]
    (Aout) -- (Bout) -- (Cout) -- cycle;
}

\definecolor{DarkDesaturatedBlue}{HTML}{3A3556}
\definecolor{VividOrange}{HTML}{F15918}
\definecolor{PureOrange}{HTML}{FFBA00}
\definecolor{LightGrayishPink}{HTML}{EEC5D5}
\definecolor{VerySoftBlue}{HTML}{B5AFDB}

\newcommand{\triple}[7]{
	
	\ifx\relax#4\relax
	\def\qoffs{0pt}
	\else
	\def\qoffs{#4}
	\fi
	
	\def\qhedge{
		($#1+#3!\qoffs!-90:#2-#3$) --
		($#2+#1!\qoffs!-90:#3-#1$) --
		($#3+#2!\qoffs!-90:#1-#2$) -- cycle}
	
	\coordinate (12) at ($#1!\qoffs!90:#2$);
	\coordinate (13) at ($#1!\qoffs!-90:#3$);
	\coordinate (23) at ($#2!\qoffs!90:#3$);
	\coordinate (21) at ($#2!\qoffs!-90:#1$);
	\coordinate (31) at ($#3!\qoffs!90:#1$);
	\coordinate (32) at ($#3!\qoffs!-90:#2$);
	
	\def\nqhedge{
		(13) let \p1=($(13)-#1$), \p2=($(12)-#1$) in
		arc[start angle={atan2(\y1,\x1)}, delta angle={atan2(\y2,\x2)-atan2(\y1,\x1)-360*(atan2(\y2,\x2)-atan2(\y1,\x1)>0)}, x radius=\qoffs, y radius=\qoffs] --
		(21) let \p1=($(21)-#2$), \p2=($(23)-#2$) in
		arc[start angle={atan2(\y1,\x1)}, delta angle={atan2(\y2,\x2)-atan2(\y1,\x1)-360*(atan2(\y2,\x2)-atan2(\y1,\x1)>0)}, x radius=\qoffs, y radius=\qoffs] --
		(32) let \p1=($(32)-#3$), \p2=($(31)-#3$) in
		arc[start angle={atan2(\y1,\x1)}, delta angle={atan2(\y2,\x2)-atan2(\y1,\x1)-360*(atan2(\y2,\x2)-atan2(\y1,\x1)>0)}, x radius=\qoffs, y radius=\qoffs] --
		cycle}
	
	\ifx\relax#5\relax
	\def\qlwidth{1pt}
	\else
	\def\qlwidth{#5}
	\fi
	
	\ifx\relax#7\relax
	\fill \nqhedge;
	\else
	\fill[#7]\nqhedge;
	\fi
	
	\ifx\relax#6\relax
	\draw[line width=\qlwidth,rounded corners=\qoffs]\nqhedge;
	\else
	\draw[line width=\qlwidth,#6]\nqhedge;
	\fi
}

\title{Spanning tight components in $4$-uniform hypergraphs}
\author{Francesco Di Braccio\affiliation{Department of Mathematics, London School of Economics and Political Science, London, United Kingdom (\textsf{\href{mailto:f.di-braccio@lse.ac.uk}{f.di-braccio},\href{mailto:b.hearn@lse.ac.uk}{b.hearn},\href{mailto:j.m.lada@.lse.ac.uk}{j.m.lada},\href{m.s.neve@lse.ac.uk}{m.s.neve},\href{l.zhang100@lse.ac.uk}{l.zhang100@lse.ac.uk}}).} \and Brian Hearn\affiliationmark[1] \and Joanna Lada\affiliationmark[1] \and Mihir Neve\affiliationmark[1] \and Lu-Ming Zhang\affiliationmark[1]}

\begin{document}
\maketitle 
\begin{abstract}
    We prove that every $n$-vertex $4$-uniform hypergraph with minimum codegree at least $\lfloor n/4 \rfloor$ has a spanning tight component. This is tight, and it settles the $4$-uniform case of a conjecture of Illingworth, Lang, M\"uyesser, Parczyk, and Sgueglia.
\end{abstract}

\section{Introduction}

Connectivity is a central notion in the study of spanning structures in graphs. Many intensively studied classes of graphs in this setting---such as cycles, trees, powers of cycles---are connected. Consequently, identifying conditions that guarantee global connectivity is both a prerequisite and a natural first step towards embedding such structures. Indeed, a common approach is to decompose the target structure into pieces which are embedded separately, and then use the host graph's robust connectivity properties to link these pieces together. A classical illustration is provided by minimum degree conditions. Dirac's seminal theorem \cite{Dir52} shows that, for an $n$-vertex graph, minimum degree $n/2$ is not only the threshold for connectivity but also forces the existence of a Hamilton cycle. Koml\'os, S\'ark\"ozy, and Szemer\'edi~\cite{kss} proved, via the method described, that asymptotically the same minimum degree suffices to ensure the presence of all trees with maximum degree $O(n/\log n)$. Applications of this method are ubiquitous; to mention a few representative examples, see~\cite{MONTGOMERY2019106793, 1factor, ringel, rodlrucinskiszemeredi}.

In this paper, we turn to the analogous question for hypergraphs: investigating degree conditions that force the emergence of global connectivity. The specific question we will investigate has its roots in a problem of Conlon and, indepedently, Gowers (see \cite{spanningsurfaces}) concerning so-called \emph{topological spheres}. Given a $k$-uniform hypergraph (or $k$-graph) $H$, a \define{copy of a $(k-1)$-sphere} is a subgraph whose down-closure induces a simplicial complex that is homeomorphic to the $(k-1)$-sphere $\mathbb{S}^{k-1}$. A subgraph $H'$ of $H$ is \define{spanning} if every vertex of $H$ lies in an edge of $H'$. 

Conlon and Gowers asked what degree conditions force a $k$-graph $H$ to contain a spanning copy of a $(k-1)$-sphere. A spanning $1$-sphere in a graph is simply a Hamilton cycle, and so this question may be viewed as a topological extension of Dirac's theorem. For higher uniformities, one of the most natural and widely studied degree conditions is the \define{minimum codegree}, defined as the minimum over all $(k-1)$-tuples $S \in \binom{V(H)}{k-1}$ of the number of edges containing $S$. In this vein, Georgakopoulos, Haslegrave, Montgomery, and Narayanan \cite{spanningsurfaces} proved that any $n$-vertex $3$-graph with minimum codegree at least $n/3 + o(n)$ contains a spanning copy of a $2$-sphere, and further conjectured that every $n$-vertex $k$-graph $H$ with minimum codegree at least $n/k$ contains a spanning copy of a $(k-1)$-sphere. They also provided a construction showing that this result and conjecture are tight up to an additive term of $k-1$.

Interestingly, $(k-1)$-spheres in $k$-graphs exhibit a strong form of connectivity captured by the following notion. A $k$-graph $H$ is said to be \define{tightly connected} if, for any $e, e' \in E(H)$, there exists a sequence of edges $f_1, \dots, f_\ell \in E(H)$ such that $e = f_0$, $e' = f_\ell$, and $|f_i \cap f_{i+1}|= k-1$ for all $i \in [\ell-1]$. A \define{tight component} is an edge-maximal tightly connected subgraph. In \cite{spanningsurfaces}, the authors construct $k$-graphs with minimum codegree roughly $n/k$ that not only avoid every spanning $(k-1)$-sphere but, in fact, contain no spanning tight component. Consequently, their conjecture about topological spheres points, more fundamentally, to $n/k$ as the threshold for global connectivity, a conjecture posed explictly by Illingworth, Lang, M\"uyesser, Parczyk, and Sgueglia~\cite{spanningspheres}. 

\begin{conjecture}[\cite{spanningspheres}]\label{conj:main}
    Every $n$-vertex $k$-uniform hypergraph with minimum codegree at least $n/k$ has a spanning tight component.
\end{conjecture}

\cref{conj:main} is known to hold for $k=2,3$: the $2$-uniform case is immediate from the fact that any two vertices in a Dirac graph have a common neighbour, while a short argument due to Mycroft (see~\cite[Corollary 7]{forcinglargetight}) yields the $3$-uniform case. 

Our main result resolves the next open case, namely $k=4$.

\begin{thm} \label{thm:main} Every $n$-vertex $4$-uniform hypergraph with minimum codegree at least $\lfloor n/4 \rfloor$ has a spanning tight component.
\end{thm}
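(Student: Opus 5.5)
We argue by contradiction: let $H$ be an $n$-vertex $4$-graph with $\delta_3(H)\ge \lfloor n/4\rfloor$ and suppose no tight component is spanning. The first step is to understand the link structure. For a vertex $v$, the link $H_v$ is a $3$-graph on $V\setminus\{v\}$ with $\delta_2(H_v)\ge \lfloor n/4\rfloor$. For a triple $T$, its neighbourhood $N(T)$ has size at least $\lfloor n/4\rfloor$. Every edge containing $T$ lies in a single tight component, so each triple determines a component $C(T)$. Fix a component $C$ of maximum size. The proof will show that $C$ covers every vertex.

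We proceed by case analysis on the triples. Call a triple $T$ \emph{$C$-good} if $C(T)=C$. The key structural step is a lemma about pairs. For any pair $\{x,y\}$, the link graph $L_{xy}=\{\{a,b\}:xyab\in E(H)\}$ has minimum degree at least $\lfloor n/4\rfloor$. Moreover, two edges of $L_{xy}$ that share a vertex give quadruples sharing three vertices, so they lie in the same tight component. Hence each connected component of $L_{xy}$ lies inside a single tight component of $H$. Since every component of $L_{xy}$ has at least $\lfloor n/4\rfloor+1$ vertices, $L_{xy}$ has at most three components. These are the basic counting facts. We then assume some vertex $u$ lies in no edge of $C$, and we aim to derive structure that forces roughly $n/4$-sized parts.

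The main step follows Mycroft's $3$-uniform argument lifted one level. For $u\notin V(C)$ and any triple $T=\{a,b,c\}$ in an edge of $C$, the triples $uab,uac,ubc$ are not $C$-good. We then count, for a fixed $C$-good triple $abc$ and its neighbourhood $N(abc)$, how the pairs through $u$ interact. The graph $L_{ua}$ has components that avoid $C$. We intend to show that the vertex set splits into at least four classes, one of which contains $u$, such that every edge meets the classes in a restricted pattern. Each class then has size at least $\lfloor n/4\rfloor$, and one class is forced to be strictly larger, giving a codegree violation. Concretely, we guess the extremal partition to be $V=V_1\cup\dots\cup V_4$, with the obstructions coming from the constructions of~\cite{spanningsurfaces}. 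We would define the classes via the components of $L_{xy}$ for a suitably chosen pair, using ``at most three components'' together with the extra vertex $u$.

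The hardest step will be this classification. It requires showing that the partition induced by different pairs is consistent, and then squeezing out the extra vertex at the exact threshold $\lfloor n/4\rfloor$ rather than $n/4+o(n)$. We would attempt it by a careful double count of edges between $N(abc)$ and the link of $u$. We will need to handle the divisibility cases of $n \bmod 4$ separately, since the floor is tight; the sharpness examples are exactly where equality holds.
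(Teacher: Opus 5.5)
\textbf{There is a genuine gap: your proposal leaves out the main argument.}

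Your preliminary facts are correct. $L_{xy}$ has minimum degree at least $\lfloor n/4\rfloor$, each of its components lies in a single tight component, and since each component has more than $n/4$ vertices there are at most three. By a slightly different argument, this is the paper's \cref{obs:basic_facts}\ref{basic_obs:fact2}: every pair lies in triples from at most three tight components.

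From ``The main step follows Mycroft's argument lifted one level'' onwards, however, you state intentions rather than arguments.
\begin{itemize}
\item You never define the four classes.
\item You give no reason why the component structures of $L_{xy}$ for different pairs $xy$ should fit into one global partition.
\item You do not show that edges meet the classes in a restricted pattern, or say where the codegree violation comes from.
\item The maximality of $C$ is never used.
\end{itemize}
Nor is there a reason to expect pair-level information to force a counterexample into a four-part structure modelled on the extremal example. A priori there can be any bounded number of tight components, and they can jointly avoid vertices in several different patterns. In the paper's language, a minimal vertex set with no spanning colour could a priori have $5$ or $6$ vertices, not only the $4$ that correspond to your four classes. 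Each such pattern has to be ruled out. Lifting Mycroft's argument is indeed the paper's strategy, but what gets lifted is the rainbow triangle, i.e.\ a small minimal obstruction, not a global partition.

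The missing idea is a way to control how many tight components there are and how they avoid vertices. The paper works one level above your pair links:
\begin{itemize}
\item It colours each triple by its tight component, so each triple lies in at least $\lfloor n/4\rfloor$ monochromatic $K_4^{(3)}$'s.
\item It merges any two colours that jointly miss a vertex, so every vertex misses at most one colour.
\item It bounds the colours at a single vertex by applying \cref{lem:UB_colours} to that vertex's coloured link. This link is an edge-colouring of $K_{n-1}$ in which every edge lies in at least $\lfloor n/4\rfloor$ monochromatic triangles, and the lemma says it uses at most $5$ colours.
\end{itemize}
That lemma is the hardest part of the paper and is sharp (see \cref{fig:extremal}), so it does not follow from ``at most three components per pair''. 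With at most $6$ colours in total, a minimal vertex set without a spanning colour has between $4$ and $6$ vertices. \cref{lem:config_classif} describes these sets. Averaging, over the triples of such a set, the vertices that extend them to monochromatic $K_4^{(3)}$'s then gives the contradiction. Your ``careful double count of edges between $N(abc)$ and the link of $u$'' would have to replace all of this, and as written it is not specified.
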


This is exactly tight by the following slight modification of the construction from \cite{spanningsurfaces}. For the original construction, take an $n$-vertex $4$-graph $H$ whose vertex set is partitioned into four disjoint sets $V_1$, $V_2$, $V_3$, and $V_4$ with $\lfloor n/4\rfloor \leq |V_1| \leq \dots \leq |V_4| \leq \lceil n/4\rceil$. For $v \in V_i$, define $\ell(v) \coloneqq i$. Any four vertices $a, b, c, d$ form an edge in $H$ if and only if $\ell(a) + \ell(b) + \ell(c) + \ell(d) \equiv 1 \pmod{4}$. We now construct a modified $4$-graph $H'$ from $H$ by adding all edges $abcd$ with $\ell(a) \equiv \ell(b) \equiv \ell(c)-1 \equiv \ell(d) -1 \pmod{4}$. The minimum codegree of $H'$ is $\lfloor n/4 \rfloor -1$, witnessed by the triples in $V_1 \times V_3 \times V_4$. It is not hard to see that every tight component of $H'$ leaves all the vertices of some $V_i$ uncovered. 

Tight connectivity has also been intensively studied in connection with various hypergraph analogues of connected ($2$)-graph classes. Accordingly, a large body of work focuses on determining minimum codegree thresholds for the appearance of specific tightly connected spanning objects, such as tight Hamilton cycles \cite{rodlrucinskiszemeredi, rodlrucinskiszemeredi3unif}, powers of tight cycles \cite{posaseymour, squarehamiltonian} and their blow-ups \cite{hypergraphbandwidth}, and bounded degree tight trees~\cite{pssms}. All of these appear at substantially higher thresholds than $n/4$, and hence have no direct bearing on our result. Other degree thresholds for spanning tight components have also been investigated, including vertex degree \cite{spanningcomponentsandsurfaces} and supported codegree \cite{spanningspheres}. 

For the proof of \cref{thm:main}, we will work with a reformulation of the problem with a Ramsey-theoretic flavour. An \define{edge-colouring} of $K_n^{(3)}$ is a mapping $\chi: E(K_n^{(3)}) \to \mathbb{N}$, and the edges $e$ with $\chi(e) = i$ are said to be \define{$i$-coloured}. A colour $i \in \mathbb{N}$ is said to be \define{spanning} if the subgraph of $i$-coloured edges is spanning. A subgraph of $K_n^{(3)}$ is \define{monochromatic} if all its edges use the same colour. We will establish the following statement, which is easily shown to be equivalent to \cref{thm:main} (see the beginning of \cref{sec:main} below).

\begin{thm}\label{thm:main_colouring}
    Every edge-colouring $\chi : E(K_n^{(3)}) \to \mathbb{N}$ in which each edge lies in at least $\lfloor n/4\rfloor $ monochromatic copies of $K_4^{(3)}$ has a spanning colour.
\end{thm}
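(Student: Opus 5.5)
We argue by contradiction. Suppose $\chi$ satisfies the hypothesis of \cref{thm:main_colouring} and no colour is spanning. For each colour $c$ let $V_c$ be the set of vertices covered by $c$-coloured edges, and write $\delta \coloneqq \lfloor n/4\rfloor$. The plan is to extract, from the failure of every colour to span, a four-part structure resembling the extremal example, and then find a triple with fewer than $\delta$ monochromatic extensions.

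\textbf{Step 1 (supports are large and closed).} If $xyz$ has colour $c$, then each of its at least $\delta$ monochromatic $K_4^{(3)}$'s adds a vertex $w$ with $xyw,xzw,yzw$ all of colour $c$. Hence $|V_c|\ge \delta+3$. Moreover, every pair $xy$ lying in a $c$-edge has at least $\delta+1$ "$c$-neighbours" $z$, that is, vertices with $\chi(xyz)=c$. Since no colour spans, each $V_c$ misses some vertex $u$, and every triple through $u$ avoids colour $c$.

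\textbf{Step 2 (links).} Fix a vertex $u$ and consider its link: the complete graph on $V\setminus\{u\}$ in which the edge $xy$ receives colour $\chi(uxy)$. The hypothesis says that every link edge $xy$ lies in at least $\delta$ link triangles $xyw$ that are monochromatic in the link colour and satisfy $\chi(xyw)=\chi(uxy)$. In particular, each link colour class has minimum degree at least $\delta$ on its own support. We then pick a colour $R$ maximising $|V_R|$, a vertex $u\notin V_R$, and look at how the link of $u$ colours the pairs inside $V_R$. The non-spanning property forces further colours $G,B,\dots$ to appear, each of which in turn misses some vertex. The aim is to show that there are four colours whose supports, together with their complements, cut $V$ into pieces exactly as in the extremal construction. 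This is the configuration suggested by regions such as $\Vberg,\Vr,\Vb,\Vg$: each region is the set of vertices covered by a prescribed triple of colours. The key tool is again Step 1: a pair covered by colour $c$ has more than $\delta$ extensions in $V_c$. Combined with the fact that the complements of the supports are nonempty, this should force these regions to have size roughly $n/4$ each and to be pairwise related in a rigid way.

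\textbf{Step 3 (counting contradiction).} Once the structure is in place, choose one vertex from each of three suitable regions, mimicking the triples in $V_1\times V_3\times V_4$ in the construction. Then count the vertices $w$ completing a monochromatic $K_4^{(3)}$. Such a $w$ must lie in the intersection of the relevant supports, and the size constraints from Step 2 leave at most $\delta-1$ such vertices, a contradiction. The floor in $\delta$ must be handled carefully here, since the construction shows $\delta-1$ fails, so the counting has to be exact and not merely asymptotic.

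\textbf{Main obstacle.} I expect Step 2 to be the hardest part. It requires a case analysis on how many colours occur in the link of $u$ and how their supports overlap, showing that every configuration other than the four-region one already yields a spanning colour or a deficient triple. My first attempt will be to show that the link of a vertex missing the largest colour contains at most three colours of substantial size. I would then use the minimum-degree property of link colour classes, together with the maximality of $|V_R|$, to pin down the overlaps.
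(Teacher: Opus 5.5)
Step 1 and the first half of Step 2 are correct, but they only record local consequences of the hypothesis: $|V_c|\ge\delta+3$, and each link colour class has minimum degree above $\delta$ on its support. The argument that would prove the theorem is missing. It is replaced by the sentence that non-spanningness ``should force'' four colours whose supports cut $V$ into regions of size roughly $n/4$, as in the extremal example, and you give no mechanism for this. Nothing in your setup bounds the total number of colours, controls how supports of different colours overlap, or explains why a counterexample should resemble the extremal construction at all.

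Your one concrete intermediate target is unsupported. You aim to show that the link of a vertex missing the largest colour carries at most three colours ``of substantial size''. But every colour occurring in a link is automatically substantial, since it contains a monochromatic star with more than $\delta$ edges. Local link arguments at abundance about $n/4$ do not confine a link to three colours. Even the bound of five link colours (\cref{lem:UB_colours}) is the longest argument in the paper.

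Step 3 has a further problem. Regions of size ``roughly $n/4$'' cannot produce a triple with at most $\delta-1$ extensions. That needs an exact stability statement, with exact region sizes and an exact colour pattern between regions, which you acknowledge but do not supply. Also, the regions $\Vberg,\Vr,\Vb,\Vg$ are the wrong picture here. They are vertex classes inside a single edge-coloured link graph, used in the paper only to bound the number of link colours; they do not describe a $3$-uniform counterexample.

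The paper avoids stability altogether, using three ideas your proposal lacks.
\begin{enumerate}
\item Repeatedly merge two colours that jointly avoid some vertex. This preserves both the abundance hypothesis and the absence of a spanning colour. Afterwards every vertex meets all but at most one colour, so the five-colour bound for links gives at most six colours in total.
\item Pick $v_i$ missing colour $i$ and take a minimal $T\subseteq\{v_i\}$ with $|T|\ge 4$ and no spanning colour. This is an $r$-configuration, which needs at least $r$ colours, so $4\le r\le 6$. Configurations this small are rigid (\cref{lem:config_classif}).
\item Average $|K(e)|\ge\delta$ over triples $e\subseteq T$ against $|V\setminus T|$. This yields a single vertex $x$ that extends several triples of $T$ to monochromatic $K_4^{(3)}$s. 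Two such triples sharing a pair must have equal colours. Combined with the absence of rainbow $K_4^{(3)}$s and the fact that each pair lies in edges of at most three colours (\cref{obs:basic_facts}), this gives the contradiction.
\end{enumerate}
The floor is absorbed by slack inequalities such as $8\delta>2(n-5)$ and $20\delta>5(n-6)$, not by a near-extremal count.
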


The proof of this theorem centres on certain special structures, which we call \define{$r$-configurations} (see \cref{defn:rconfig} below). These are edge-colourings of $K_r^{(3)}$ with no spanning colour, but with the property that the colouring induced on every proper subset of $V(K_r^{(3)})$ has a spanning colour. Assuming for a contradiction that $\chi$ has no spanning colour, it follows readily that $\chi$ must contains a non-empty subset inducing such a structure. 

We use this $r$-configuration in the following way. We first show, in the most challenging part of the proof, that in proving \cref{thm:main_colouring} it suffices to consider the case where $\chi$ uses no more than six distinct colours. A key feature of an $r$-configuration is that it necessarily uses at least as many colours as it has vertices; hence the structure found above has at most six vertices. Crucially, an $r$-configuration of such small size can be described rather precisely. We ultimately show that its structural properties are incompatible with the requirement that each of its edges lies in many monochromatic copies of $K_4^{(3)}$ in $\chi$, contradicting the assumption of \cref{thm:main_colouring}. 

\paragraph{Organisation.} We first discuss notation in \cref{sec:notation}. \cref{sec:main} is our main section, where we prove the equivalence of \cref{thm:main} and \cref{thm:main_colouring}, state the key lemmas that are needed for \cref{thm:main_colouring}, and finally give its proof (after a more thorough overview). \cref{sec:colorbounds} and \cref{sec:rconfigs} are devoted to the proofs of the key lemmas: the first essentially shows how to bound the number of colours appearing in $\chi$, whereas the second provides the aforementioned characterization of $r$-configurations on at most six vertices. We finish in \cref{sec:conclud} with some concluding remarks and open problems.

%%%%%%%%%%%%%%%%%%%%%%%%%%%%%%%%%%%%%

\section{Notation}\label{sec:notation}

Throughout the paper, we make use of the following notation. Given $k \geq 2$, a \define{$k$-graph} $H$ consists of a vertex set $V(H)$ together with an edge set $E(H) \subseteq \binom{V(H)}{k}$. We denote by \define{$e(H)$} the size of $E(H)$. Given a vertex set $X \subseteq V(H)$, we denote by \define{$H[X]$} the induced subgraph of $H$ on $X$. A subgraph $H'$ of $H$ is said to be \define{spanning} if every vertex in $V(H)$ is incident with an edge of $H'$. The $n$-vertex \define{complete $k$-graph}, denoted \define{$K_n^{(k)}$}, has $n$ vertices and satisfies $E(K_n^{(k)}) = \binom{V(K_n^{(k)})}{k}$. A $k$-uniform \define{tight path} is a sequent of distinct vertices $v_1, \dots, v_t$ with any $k$ consecutives one forming an edge; a \define{tight cycle} has all consecutive $k$-tuples as edges in a cyclic manner.

An edge-colouring of $K_n^{(k)}$ is a mapping $\chi: E(K_n^{(k)}) \to \mathbb{N}$. Given an edge-colouring $\chi$, a subgraph $H$ of $K_n^{(k)}$ is said to be \define{monochromatic} if there exists $i \in \mathbb{N}$ such that $\chi(e) = i$ for all $e \in E(H)$; it is said to be \define{rainbow} if $\chi(e) \neq \chi(e')$ for any distinct edges $e, e' \in E(H)$. Given $S \subseteq V(K_n^{(k)})$, the \define{induced subcolouring} of $\chi$ on $S$, denoted \define{$\chi[S]$}, is the mapping $\chi[S]: \binom{S}{k} \to \mathbb{N}$ satisfying $\chi[S](e) = \chi(e)$ for every edge $e \in \binom{S}{k}$.

For each $i \in \mathbb{N}$, we denote by~\define{$H_i^{\chi}$} the (monochromatic) subgraph of $K_n^{(k)}$ consisting of all $i$-coloured edges. A vertex $v \in V(K_n^{(k)})$ is \define{incident} with a colour $i \in \mathbb{N}$ if there exists an edge $e \in E(K_n^{(k)})$ such that~$v \in e$ and $\chi(e) = i$. The colour $i \in \mathbb{N}$ is said to be \define{spanning} if $H_i^{\chi}$ is a spanning subgraph of $K_n^{(k)}$. Finally, given a vertex $v \in V(K_n^{(k)})$, the \define{coloured link graph} of $v$, denoted \define{$L(v)$}, is the edge-colouring of the complete $(k-1)$-graph on vertex set $V(K_n^{(k)}) \setminus \{v\}$ in which each edge $e$ takes the same colour of $e \cup \{v\}$ under $\chi$. 

To simplify notation, we will sometimes write $v_1 \dots v_k$ as shorthand for the set $\{v_1, \dots, v_k\}$.

\section{Overview and proof of the main results}\label{sec:main}

In this section, we prove our main results, deferring the proof of certain key lemmas to later sections. We begin by establishing the equivalence of \cref{thm:main} and \cref{thm:main_colouring}. We then turn to \cref{thm:main_colouring}, outlining its proof while introducing various tools required for it. We end the section by completing its proof using these tools. 

\begin{proof}[Proof of the equivalence of \cref{thm:main} and \cref{thm:main_colouring}.]
    We first show that \cref{thm:main_colouring} implies \cref{thm:main}. Let $H$ be an $n$-vertex $4$-uniform hypergraph with minimum codegree at least $\lfloor n/4\rfloor$. Take a copy of $K_n^{(3)}$ on the same vertex set as $H$. It is easy to see that every triple in $E(K_n^{(3)})$ is included in the edges of a unique tight component of $H$. We may thus construct an edge-colouring $\chi: E(K_n^{(3)}) \to \mathbb{N}$ by associating a distinct colour to each tight component of $H$, and colouring each triple with the colour of its unique tight component. Each edge $e \in E(H)$ induces a monochromatic $K_4^{(3)}$ under $\chi$; hence, every triple in $E(K_n^{(3)})$ is contained in at least $\lfloor n/4\rfloor$ such monochromatic cliques by the codegree condition on $H$. Then, provided \cref{thm:main_colouring} holds, $\chi$ has a spanning colour. By construction, the edges of this colour class all come from the same tight component of $H$, which must therefore be spanning, proving \cref{thm:main}.  

    Next, we prove the other direction, namely that \cref{thm:main} implies \cref{thm:main_colouring}. Let $\chi$ be an edge-colouring of $K_n^{(3)}$ in which each edge is contained in at least $\lfloor n/4\rfloor$ monochromatic copies of $K_4^{(3)}$. We construct a $4$-graph $H$ on the same vertex set by adding an edge for each quadruple of vertices inducing a monochromatic $K_4^{(3)}$ under $\chi$. Then, the condition that each triple is contained in $\lfloor n/4\rfloor$ monochromatic copies of $K_4^{(3)}$ translates to $H$ having minimum codegree at least $\lfloor n/4\rfloor$. Provided \cref{thm:main} holds, $H$ must have a spanning tight component $C$. 

    We claim that all the triples included in edges of $C$ use the same colour. Consider any pair of triples $t, t' \in E(K_n^{(3)})$ and edges $e, e' \in E(C)$ with $t \subseteq e$ and $t' \subseteq e'$. By the tight connectivity of $C$, there exists a sequence $f_1, \dots, f_\ell \in E(C)$ with $f_1 = e$, $f_\ell = e'$, and $|f_i \cap f_{i+1}| =3$. Translating back to the edge-colouring $\chi$, this yields a sequence of monochromatic $K_4^{(3)}$s with any two consecutive ones intersecting in three vertices. The intersection property ensures that every clique in this sequence uses the same colour as the one before it and the one after. Hence, they all use the same colour, which forces $\chi(t) = \chi(t')$. This shows that all triples included in edges of $C$ use the same colour, which is spanning in $\chi$ because $C$ is spanning in $H$, thereby proving \cref{thm:main_colouring}.
\end{proof}

We now turn to \cref{thm:main_colouring}, whose proof is outlined below together with the lemmas required for it. In the argument, we assume the existence of an edge-colouring $\chi$ of $K_n^{(3)}$ with no spanning colour, but such that every edge extends to at least $\lfloor n/4\rfloor$ monochromatic copies of $K_4^{(3)}$, and we derive a contradiction. 

The first step in the proof is to bound the number of colours appearing in $\chi$. Using the condition that every edge lies in many monochromatic copies of $K_4^{(3)}$, it is not hard to see that this number is bounded above by a constant. Our argument, however, demands a particularly strong upper bound: namely, that no more than $6$ colours appear. To this end, in \cref{sec:colorbounds} we prove the following lemma about edge-colourings of complete ($2$-)graphs, which may be of independent interest. 

\begin{lem}\label{lem:UB_colours} Any edge-colouring $\chi: E(K_n) \to \mathbb{N}$ in which every edge lies in at least $\abund$ monochromatic triangles uses at most 5 colours.    
\end{lem}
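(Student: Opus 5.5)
The plan is to extract local constraints from the triangle condition, turn them into a combinatorial statement about the family of colour sets seen at each vertex, and then rule out six or more colours by counting. Write $m \coloneqq \abund$, and for a vertex $v$ and colour $i$ let $N_i(v)$ be the set of vertices $u$ with $\chi(uv)=i$, and $d_i(v) \coloneqq |N_i(v)|$.

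\textbf{Step 1 (few colours at each vertex).} If $\chi(uv)=i$, then $uv$ lies in at least $m$ monochromatic triangles $uvw$, all of colour $i$. Each such $w$ lies in $N_i(v)\setminus\{u\}$, so $d_i(v)\ge m+1$ for every colour $i$ at $v$. The sets $N_i(v)$ partition $V\setminus\{v\}$, so the number of colours at $v$ is at most $(n-1)/(m+1)$. Since $m+1>(n-1)/4$, this is less than $4$: every vertex sees at most three colours.

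The same argument, applied inside $N_i(v)$, shows that the $i$-coloured graph restricted to $N_i(v)$ has minimum degree at least $m$. Hence every vertex of the colour class $G_i$ has $G_i$-degree at least $m+1$, and $G_i$ spans at least $m+2$ vertices.

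\textbf{Step 2 (intersecting family).} Let $C(v)$ be the set of colours at $v$; each has size at most $3$. For any two vertices $u,w$, the edge $uw$ has a colour lying in $C(u)\cap C(w)$. So $\{C(v)\}$ is a pairwise intersecting family of sets of size at most $3$, and every colour lies in at least $m+2$ of these sets.

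Assume for contradiction that at least six colours occur. I would split according to the structure of this intersecting family.
\begin{enumerate}
\item \emph{No common colour.} The classification of intersecting families of $3$-sets without a common element forces all colours into a configuration on at most $7$ symbols: sub-triangles of a small ground set, or lines of a Fano plane. With six or more colours, the sets $C(v)$ are then essentially of size exactly $3$ and come from a very limited list of types.
\item \emph{Common colour $x$.} Every vertex sees $x$, and the remaining colour sets $C(v)\setminus\{x\}$ have size at most $2$.
\end{enumerate}

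\textbf{Step 3 (counting).} In each case, I would double count vertex--colour incidences. Colours are weighted by their class sizes, each at least $m+2$ and in fact larger when forced. Vertices are weighted by degree: a vertex with three colours has $d_a+d_b+d_c=n-1$ with each term at least $m+1$, so every $d_i(v)$ is at most $n-1-2(m+1)\approx n/2$.

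Additional leverage comes from edges between vertices of different types. If $C(u)\cap C(w)=\{i\}$ is a single colour, then $uw$ must have colour $i$. This forces large monochromatic complete bipartite pieces, and their edges must also lie in $m$ monochromatic triangles. That in turn forces more vertices of the same type and pushes class sizes well beyond $m+2$. Combining these bounds should give $6(m+2)$ or more incidences against at most $3n$ available ones, with enough slack to reach a contradiction.

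\textbf{Main obstacle.} The hard part is the star case, Case~2. There, crude counting only gives $k(m+2)\le 2n$ for the number $k$ of non-$x$ colours, i.e.\ roughly $k\le 7$. I would first try to exploit that two vertices whose non-$x$ colour sets are disjoint must be joined in colour $x$. Vertices sharing a non-$x$ colour $i$ must see $i$ on at least $m+1$ edges, so each $G_i$ restricted to its vertex set is quite dense. I expect a careful optimisation over the sizes of the classes $\{v: C(v)\setminus\{x\}=S\}$, together with the exact floor $m=\abund$ (and a few small-$n$ checks), to cap $k$ at $4$, giving at most five colours in total. This exactness is precisely where the bound $5$, rather than some larger constant, comes from.
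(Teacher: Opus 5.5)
There is a genuine gap. Your Steps~1 and~2 are correct: at most three colours per vertex, pairwise intersecting colour sets, and every colour seen by at least $m+2$ vertices. From there on, however, the proposal is a programme rather than a proof, and its one concrete structural claim is false.

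Intersecting families of sets of size at most $3$ with no common element are not confined to $7$ symbols. For example, $\{1,2,y\}$ for arbitrarily many $y$, together with $\{1,4,5\}$ and $\{2,4,6\}$, is such a family. So Case~1 does not collapse to a short list of types. All you have there is the incidence bound $k(m+2)\le 3n$, i.e.\ at most $11$ colours.

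Worse, Case~1 is exactly where the lemma is sharp. Write $s$ for the parameter called $m$ in \cref{fig:extremal}. The $6$-colouring of $K_{32s+7}$ there is $(8s+1)$-abundant, one below $\lfloor (32s+8)/4\rfloor=8s+2$, and it has no common colour. The large cluster, about a quarter of all vertices, sees only red and blue. Two clusters see blue and two further colours, and four see red and two further colours. So an argument for Case~1 must be exact to within one unit of abundance, and it must cope with many vertices seeing only two colours. It cannot come from comparing $6(m+2)\approx 3n/2$ incidences with $3n$, which gives no contradiction at all. Step~3 names no mechanism that achieves this, and Case~2 is likewise deferred to an expected optimisation.

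The missing ingredient is a sharp local count, which the paper obtains as follows.
\begin{itemize}
\item For a vertex $v$ seeing $R,G,B$, the surpluses $\gamma_i=|N_i(v)|-\abund$ satisfy $\gamma_R+\gamma_G+\gamma_B=\abund+\bigl((n+1)\bmod 4\bigr)-2\le\abund+1$.
\item Take a rainbow path $u'wuw'$ alternating between $N_R(v)$ and $N_B(v)$. Inclusion--exclusion over the monochromatic triangles on its edges forces at least $\gamma_G+t+2$ vertices of $N_G(v)$ to close triangles with both end edges (\cref{lem:technical_lemma}).
\item With six colours this produces a rainbow $K_4$ (\cref{lem:symmetric_four}). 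Incidentally, this shows your Case~2 cannot occur with six colours.
\item The vertex types $\Vberg,\Vr,\Vg,\Vb$ then form a blow-up of this rainbow $K_4$, of total size $\Lambda$.
\item The rainbow triangle $abc$ of the blow-up yields at least $3\abund-\sum_i\gamma_i+3\ge n/2$ vertices forming transversal monochromatic triangles with its edges. None of these is of the four types, so $\Lambda\le n/2$.
\item On the other hand, the rainbow paths $a'bab'$ and their rotations give $2|V_A|<\Lambda-n/4$ for each type $A$ (\cref{clm:rgbineq}). Summing yields $\Lambda>n/2$, a contradiction.
\end{itemize}
Some two-sided squeeze of this kind, driven by the exact surplus identity rather than the bound $|V(G_i)|\ge m+2$, is what your Step~3 would have to supply. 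As written, the proposal does not prove the lemma.
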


The way to relate \cref{lem:UB_colours} to our edge-colouring $\chi$ of $K_n^{(3)}$ is via the notion of the \emph{coloured link graph} $L(v)$ of each vertex $v \in V(K_n^{(3)})$. Recall from \cref{sec:notation} that $L(v)$ is defined as the edge-colouring of the complete ($2$-uniform) graph on vertex set $V(K_n) \setminus \{v\}$ in which each edge $xy$ inherits the colour of $vxy$ under $\chi$. For every $v \in V(K_n^{(3)})$, the properties of $\chi$ readily imply that $L(v)$ satisfies the assumption of \cref{lem:UB_colours}. This shows that $L(v)$ uses at most $5$ colours; in other words, $v$ is incident with at most $5$ colours in $\chi$. 

To convert this into a global bound on the number of colours, we perform an iterative procedure on $\chi$ which merges colours together whenever they jointly avoid a vertex. This modifies the colouring while preserving all its relevant properties. At its conclusion, every vertex is incident with all but at most one colour appearing in $\chi$, and hence the total number of colours is at most $6$.

The bound on the number of colours is used in the following way. First, we exploit the absence of a spanning colour in $\chi$ to show it contains the following structure as an induced subcolouring.

\begin{defn}[$r$-configuration]\label{defn:rconfig}
Let $r \geq 3$. An edge-colouring of~$K_r^{(3)}$ is said to be an \define{$r$-configuration} if it satisfies the following properties.
\begin{enumerate}[label = \itmarab{C}]
    \item\label{config:1} the colouring $\chi$ has no spanning colour; and
    \item\label{config:2} for every proper subset $S \subsetneq V(K_r^{(3)})$ with $|S| \geq 3$, the induced subcolouring $\chi[S]$ has a spanning colour.
\end{enumerate}
\end{defn}

Every $r$-configuration necessarily has edges of at least $r$ distinct colours; indeed, there are colours spanning all its $(r-1)$-element subsets by \ref{config:2}, and they must all be distinct. Hence the bound on the total number of colours implies that the $r$-configuration we found in $\chi$ has $r \leq 6$ vertices. At the other end of the range, it must also satisfy $r \geq 4$, which readily follows from \ref{config:1}.

It turns out that $r$-configurations of such small size can be characterized rather precisely. In fact, up to relabelling the vertices and colours, there is a unique $r$-configuration for each $r \in \{4,5\}$. Under an additional colouring constraint arising from our assumption in \cref{thm:main_colouring}, there is also a unique $6$-configuration (specifically, the constraint in question is that every pair of vertices lies in edges of at most three colours, see \cref{obs:basic_facts}\ref{basic_obs:fact2}). Fully describing the structures arising from the cases $r= 5,6$ is not necessary for the proof of \cref{thm:main_colouring}; for our purposes, it suffices to establish \cref{lem:config_classif} below, which captures their key properties. Its proof is given in \cref{sec:rconfigs}. The unique $5$-configuration and $6$-configuration are depicted in \cref{fig:configurations}.

\begin{lem}\label{lem:config_classif}
The following hold for every $r$-configuration $\chi$.
\begin{enumerate}[label = \textnormal{(\roman*)}]
    \item\label{lem:config_classif:1} If $r=4$, then $\chi$ is rainbow.
    \item\label{lem:config_classif:2} If $r=5$, then $\chi$ has at most two edges of each colour.
    \item\label{lem:config_classif:3} If $r=6$ and $\chi$ uses exactly $6$ colours, then $\chi$ has at most $5$ edges of each colour. Additionally, if every pair of vertices lies in edges of at most $3$ distinct colours, then $\chi$ contains no monochromatic $K_4^{(3)}$.
\end{enumerate}
\end{lem}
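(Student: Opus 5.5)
Throughout, write $V$ for the $r$ vertices. The basic observation is this. For each $v\in V$, \ref{config:2} applied to $S=V\setminus\{v\}$ gives a colour $c_v$ spanning $V\setminus\{v\}$. By \ref{config:1}, $c_v$ is not spanning in $\chi$, so \emph{every} $c_v$-coloured edge avoids $v$. Hence the colours $c_v$ are pairwise distinct: if $c_v=c_w$ with $v\neq w$, this colour would span $(V\setminus\{v\})\cup(V\setminus\{w\})=V$.

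For (i), with $r=4$, the only triple avoiding $v$ is $V\setminus\{v\}$. So that triple is coloured $c_v$, and the four edges receive the four distinct colours $c_v$, i.e.\ $\chi$ is rainbow.

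For (ii), with $r=5$, every $c_v$-edge lies inside the $4$-set $V\setminus\{v\}$. A single triple cannot cover four vertices, so each $c_v$ has at least $2$ edges. There are five distinct colours $c_v$ and only $\binom53=10$ edges, so each $c_v$ has exactly two edges and no other colour appears. In particular every colour is used at most twice.

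For (iii), $r=6$ and exactly $6$ colours are used, so these are precisely $c_1,\dots,c_6$. Every colour $c_v$ lives on the $10$ triples of $V\setminus\{v\}$ and has at least $2$ edges. I would add information from the $4$-subsets. For distinct $w,x$, \ref{config:2} gives a colour spanning $V\setminus\{w,x\}$. Any $c_u$ with $u\notin\{w,x\}$ avoids $u\in V\setminus\{w,x\}$, so this colour is $c_w$ or $c_x$, and it has at least two triples inside $V\setminus\{w,x\}$.

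Now suppose some $c_v$ has at least $6$ edges; the aim is a contradiction by double counting. Only $\le 4$ triples avoiding $v$ remain for the other five colours. Every pair $\{w,x\}\subseteq V\setminus\{v\}$ forces two triples of colour $c_w$ or $c_x$ inside the $4$-set $V\setminus\{w,x\}$, which contains $v$. Among any two triples covering a $4$-set containing $v$, at least one contains $v$, and often both do. Counting these forced pairs over the $10$ pairs $\{w,x\}$ against the $10$ triples through $v$ and the $\le 4$ free triples avoiding $v$ should produce the contradiction. Each of the five colours $c_w$ ($w\ne v$) must also cover $v$ and the four vertices of $V\setminus\{v,w\}$. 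I expect this counting, possibly with a small case split on how many $c_v$-triples avoid a given vertex, to be the main obstacle. If the count is too loose, I would fall back on a direct case analysis of which $4$-sets $V\setminus\{w,x\}$ are spanned by which of $c_w,c_x$, viewed as an orientation of $K_5$ on $V\setminus\{v\}$.

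For the second part of (iii), suppose there is a monochromatic $K_4^{(3)}$ on a $4$-set $Q$. Its colour is some $c_v$ with $v\notin Q$; let $u$ be the sixth vertex. For a pair $xy\subseteq Q$, the four triples through $xy$ are two triples in $Q$ (coloured $c_v$), $xyv$ (not coloured $c_v$), and $xyu$. The hypothesis that $xy$ sees at most $3$ colours therefore limits the colours on the triples through $u$ and through $v$. I would combine this with the requirements that each $c_w$, $w\in Q$, spans $V\setminus\{w\}$, and that the $4$-sets $V\setminus\{w,x\}$ with $w,x\in Q$ are spanned by $c_w$ or $c_x$ using triples through $u$ and $v$. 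By (iii)'s first part, $c_v$ has at most one further edge. The aim is to show that too many distinct colours would be forced onto some pair, giving the contradiction.
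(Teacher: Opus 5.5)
\textbf{Parts (i) and (ii) are fine; part (iii) is not proved.} For (i) and (ii) your argument is essentially the paper's: the colours $c_v$ are distinct, each is confined to $V\setminus\{v\}$, and you count the $\binom{5}{3}=10$ edges.

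\textbf{First half of (iii).} The only per-colour lower bound you establish is $2$. Since $6+5\cdot 2=16\le 20$, plain edge counting cannot rule out a colour with $6$ edges. The finer double count over pairs $\{w,x\}\subseteq V\setminus\{v\}$ that you say ``should produce the contradiction'' is never carried out, and all the work lies there. The missing ingredient is that every colour class has at least $3$ edges. The paper proves this as follows, labelling so that $v_i$ misses colour $i$. If colour $6$ had only two edges, they would be triples meeting in one vertex, say $v_1v_2v_3$ and $v_1v_4v_5$. Without loss of generality $\chi(v_1v_2v_4)=3$. Now require the quadruples $v_1v_2v_3v_4$, $v_1v_3v_4v_5$, $v_2v_3v_4v_5$ to be non-rainbow; this is exactly your condition that $V\setminus\{w,x\}$ is spanned by $c_w$ or $c_x$. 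It successively forces $\chi(v_1v_3v_4)=\chi(v_2v_3v_4)=5$, then $\chi(v_1v_3v_5)=\chi(v_3v_4v_5)=2$, then $\chi(v_2v_3v_5)=\chi(v_2v_4v_5)=1$. This makes $v_1v_2v_3v_5$ rainbow, a contradiction. With at least $3$ edges in each of the six classes, the bound $e(H_i)\le 20-15=5$ is immediate.

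\textbf{Second half of (iii).} The constraint you propose to use is vacuous. For a pair $xy\subseteq Q$, two of its four triples lie in $Q$ and both have colour $c_v$, so $xy$ automatically sees at most $3$ colours. The paper's argument is a global double count:
\begin{itemize}
\item The colour $c_v$ must span $Q\cup\{u\}$, so it has the four triples of $Q$ plus an edge through $u$, hence exactly $5$ edges. By the lower bound, every other class then has exactly $3$ edges.
\item Let $\phi_i$ be the number of pairs lying in at least two $i$-coloured edges. Each pair lies in $4$ triples but sees at most $3$ colours, so $\sum_i\phi_i\ge 15$.
\item Let $\Phi_i$ be the number of quadruples containing at least two $i$-coloured edges. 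No quadruple is rainbow, so $\sum_i\Phi_i\ge 15$.
\item The $5$-edge class has $\phi=6$ and $\Phi=3$.
\item A $3$-edge class has $\Phi_i\le 3$, with equality only for the configuration $abc,abd,abe$, which gives $\phi_i=1$.
\item A $3$-edge class has $\phi_i\le 3$, with equality only if the class lies inside a $4$-set. That is impossible, since each class spans $5$ vertices.
\end{itemize}
Hence at least two of the $3$-edge classes have $\Phi_i=3$ and so $\phi_i=1$. The remaining three classes must then contribute at least $15-6-1-1=7$ to $\sum_i\phi_i$, so some $\phi_j\ge 3$, a contradiction. Your sketch contains none of these steps, so as written (iii) is unproved.
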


This lemma may then be applied to the $r$-configuration found in the proof of \cref{thm:main_colouring} to show that it satisfies either \ref{lem:config_classif:1}, \ref{lem:config_classif:2}, or \ref{lem:config_classif:3}. All the edges inside this structure lie in at least $\lfloor n/4\rfloor$ monochromatic copies of $K_4^{(3)}$; hence, by averaging, some vertex forms monochromatic cliques with approximately one fourth of these edges. We finish the proof of \cref{thm:main_colouring} by showing that this fact, in combination with whichever option among \ref{lem:config_classif:1}, \ref{lem:config_classif:2}, and \ref{lem:config_classif:3} holds, yields a contradiction. 

The last ingredient we require for executing the above strategy is the following observation, used frequently in the proof of \cref{thm:main_colouring}. 

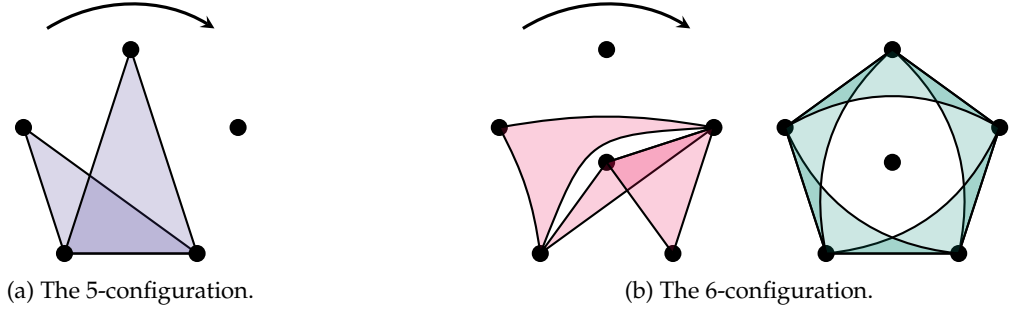
\begin{figure}[t]
    \begin{subfigure}{0.49\textwidth}
    \centering
    \begin{tikzpicture}[thick, scale = 1.5]
    %labels
    
   \coordinate (v1) at (90:1);
    \coordinate (v5) at (162:1);
    \coordinate (v4) at (234:1);
    \coordinate (v3) at  (306:1);
    \coordinate (v2) at (18:1);
    \coordinate (o) at (0, 0);

    \filldraw[fill=BlueViolet, fill opacity = 0.2, draw=black, thick] 
        (v3) -- (v4) -- (v5) -- cycle;
    \filldraw[fill=BlueViolet, fill opacity = 0.2, draw=black, thick] 
        (v3) -- (v4) -- (v1) -- cycle;

    \tikzstyle{every node}=[circle, draw, fill, inner sep=2pt, minimum width=2pt]
    \draw (v1) node {};
    \draw (v2) node {};
    \draw (v3) node {};
    \draw (v4) node {};
    \draw (v5) node {};

    \draw[very thick, -stealth] (122:1.4) arc (122:58:1.4);

\end{tikzpicture}
\caption{The $5$-configuration.}
\end{subfigure}
\begin{subfigure}{0.49\textwidth}
\centering
       \begin{tikzpicture}[thick, scale=1.5]
    %labels
    
    \coordinate (v1) at (90:1);
    \coordinate (v5) at (162:1);
    \coordinate (v4) at (234:1);
    \coordinate (v3) at  (306:1);
    \coordinate (v2) at (18:1);
    \coordinate (v6) at (0, 0);

    \tikzstyle{every node}=[circle, draw, fill, inner sep=2pt, minimum width=2pt]
    \draw (v1) node {};
    \draw (v2) node {};
    \draw (v3) node {};
    \draw (v4) node {};
    \draw (v5) node {};
    \draw (v6) node {};

    \filldraw[fill=OrangeRed, fill opacity = 0.2, draw=black, thick] 
        (v5) to[bend left = 10] (v2) .. controls (126:0.3) .. (v4)     
        to [bend right = 10] (v5); 

    \filldraw[fill=OrangeRed, fill opacity = 0.2, draw=black, thick] 
        (v6) -- (v2) -- (v4) -- cycle;

    \filldraw[fill=OrangeRed, fill opacity = 0.2, draw=black, thick] 
        (v6) -- (v2) -- (v3) -- cycle;

      \draw[very thick, -stealth] (122:1.4) arc (122:58:1.4);

\end{tikzpicture} \hspace{15pt} \begin{tikzpicture}[thick, scale=1.5]
    %labels
    
    \coordinate (v1) at (90:1);
    \coordinate (v5) at (162:1);
    \coordinate (v4) at (234:1);
    \coordinate (v3) at  (306:1);
    \coordinate (v2) at (18:1);
    \coordinate (v6) at (0, 0);

    \tikzstyle{every node}=[circle, draw, fill, inner sep=2pt, minimum width=2pt]
    \draw (v1) node {};
    \draw (v2) node {};
    \draw (v3) node {};
    \draw (v4) node {};
    \draw (v5) node {};
    \draw (v6) node {};

    \filldraw[fill=PineGreen, fill opacity = 0.2, draw=black, thick] 
        (v3) to[bend right = 30] (v1) -- (v2) -- cycle; 
    \filldraw[fill=PineGreen, fill opacity = 0.2, draw=black, thick] 
        (v4) to[bend right = 30] (v2) -- (v3) -- cycle; 
    \filldraw[fill=PineGreen, fill opacity = 0.2, draw=black, thick] 
        (v5) to[bend right = 30] (v3) -- (v4) -- cycle; 
    \filldraw[fill=PineGreen, fill opacity = 0.2, draw=black, thick] 
        (v1) to[bend right = 30] (v4) -- (v5) -- cycle; 
    \filldraw[fill=PineGreen, fill opacity = 0.2, draw=black, thick] 
        (v2) to[bend right = 30] (v5) -- (v1) -- cycle; 

    \draw[white, very thick, -stealth] (122:1.4) arc (122:58:1.4);

    \end{tikzpicture}

    \caption{The $6$-configuration.}\label{fig:6config}
        
    \end{subfigure}

    \caption{ A schematic depiction of the unique $5$-configuration and the unique $6$-configuration with no pair of vertices lying in edges of $4$ distinct colours. The $5$-configuration has as its colour classes the five rotations of the tight path in (a). The $6$-configuration has as one of its colour classes the tight $5$-cycle on the right in (b), while the remaining five classes correspond to the five rotations of the tight path on the left.}\label{fig:configurations}

    \end{figure}

\needspace{6\baselineskip}
\begin{obs}\label{obs:basic_facts} Every edge-colouring $\chi: K_n^{(3)} \rightarrow \mathbb{N}$ in which every edge lies in at least $\lfloor n/4 \rfloor$ monochromatic copies of $K_4^{(3)}$ satisfies the following. 
\begin{enumerate}[label = \textnormal{(\roman*)}]
    \item\label{basic_obs:fact1} $\chi$ contains no rainbow copy of $K_4^{(3)}$.
    \item\label{basic_obs:fact2} Every pair of vertices lies in edges of at most 3 distinct colours.
\end{enumerate}  
\end{obs}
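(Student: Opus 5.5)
Both parts will follow from one double-counting argument. A triple lies in a monochromatic $K_4^{(3)}$ through a fourth vertex $x$ only if every triple in that clique gets the colour of the original triple. So a single vertex $x$ cannot serve as the fourth vertex for two triples of different colours that share a pair. I will count such extension vertices over a small family of pairwise differently coloured triples. The disjointness forces the total to be at most about $n-4$. The hypothesis forces it to be at least $4\lfloor n/4\rfloor \ge n-3$, a contradiction. The only point needing care is to see that the extension sets are pairwise disjoint and avoid enough vertices; there is no genuine obstacle.

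For \ref{basic_obs:fact1}, suppose $abcd$ induces a rainbow $K_4^{(3)}$.
\begin{itemize}
\item For each of the four triples $t\subseteq abcd$, let $X_t$ be the set of vertices $x$ such that $t\cup\{x\}$ is a monochromatic $K_4^{(3)}$. By hypothesis $|X_t|\ge\lfloor n/4\rfloor$.
\item The remaining vertex of $abcd$ is not in $X_t$, since $abcd$ is rainbow. Hence $X_t\subseteq V\setminus\{a,b,c,d\}$.
\item The sets $X_t$ are pairwise disjoint. Any two of the triples, say $abc$ and $abd$, share a pair $ab$. If $x\in X_{abc}\cap X_{abd}$, then $\chi(xab)=\chi(abc)$ and $\chi(xab)=\chi(abd)$, contradicting $\chi(abc)\ne\chi(abd)$.
\item Therefore $4\lfloor n/4\rfloor\le n-4$. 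This contradicts $4\lfloor n/4\rfloor\ge n-3$.
\end{itemize}

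For \ref{basic_obs:fact2}, suppose some pair $uv$ lies in triples $uvw_1,\dots,uvw_4$ of four distinct colours $c_1,\dots,c_4$.
\begin{itemize}
\item Let $X_i$ be the set of vertices $x$ for which $uvw_ix$ is a monochromatic $K_4^{(3)}$. By hypothesis $|X_i|\ge\lfloor n/4\rfloor$.
\item Every $x\in X_i$ satisfies $\chi(uvx)=c_i$. So the $X_i$ are pairwise disjoint.
\item No $X_i$ contains a vertex $w_j$: for $j\ne i$ we have $\chi(uvw_j)=c_j\ne c_i$, and $w_i\notin X_i$ trivially. Clearly $u,v\notin X_i$ as well.
\item Therefore $4\lfloor n/4\rfloor\le n-6$. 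This again contradicts $4\lfloor n/4\rfloor\ge n-3$.
\end{itemize}

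Alternatively, \ref{basic_obs:fact1} follows from the same pair-based count applied to the pair $ab$ with only the two triples $abc,abd$. However, the four-triple count above is the cleanest way to present it.
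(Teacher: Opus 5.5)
Your proof is correct and is essentially the paper's argument: the extension sets avoid the $4$ (resp.\ $6$) vertices involved, and they are pairwise disjoint because any two of the triples share a pair, so $4\lfloor n/4\rfloor \ge n-3$ exceeds the $n-4$ (resp.\ $n-6$) vertices available (the paper phrases the disjointness as a pigeonhole step). Your closing ``alternative'' remark is false and should be dropped: the two triples $abc$ and $abd$ alone only give $2\lfloor n/4\rfloor \le n-4$, which is no contradiction, so the count genuinely needs all four triples.
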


\begin{proof}
Throughout the proof, for each edge $e \in E(K_n^{(3)})$, we use $K(e)$ to denote the set of vertices $v \in V(K_n^{(3)})$ such that $e \cup \{v\}$ induces a monochromatic $K_4^{(3)}$ in $\chi$ (so that $|K(e)| \geq \lfloor n/4 \rfloor$ in general).

For \ref{basic_obs:fact1}, suppose for a contradiction that $\chi$ contains a rainbow $K_4^{(3)}$ on vertex set $U \subseteq V(K_n^{(3)})$. Let $e_1,...,e_4$ be the triples in $\binom{U}{3}$, each having been assigned a distinct colour under $\chi$. Since $\chi[U]$ is rainbow, we have $K(e_i) \cap U = \emptyset$ for each $i \in [4]$. Then the minimum containment condition yields $\sum_{i \in [4]} |K(e_i)| \geq 4\lfloor n/4\rfloor \geq  n -3 > |V(K_n^{(3)}) \setminus U|$. By pigeonhole, there exists a vertex $x \in V(K_n^{(3)}) \setminus U$ extending distinct edges $e_i$ and $e_j$ to monochromatic $K_4^{(3)}$s of distinct colours. Observe that $|e_i \cap e_j| = 2$. This yields a contradiction as the triple $(e_i \cap e_j) \cup \{x\}$ lies in both of these cliques but only takes one colour. 
    
    For \ref{basic_obs:fact2}, suppose for a contradiction that a pair of vertices $uv$ lies in four edges $e_1$, $e_2$, $e_3$, and $e_4$, each using a distinct colour. Note that $K(e_i) \cap e_j = \emptyset$ for each $i, j \in [k]$. Similarly to the above, we have $\sum_{i \in [k]} K(e_i) \geq n-3 > |V(K_n^{(3)}) \setminus \bigcup_{i \in [4]} e_i|$. By pigeonhole, there exists a vertex $x \in V(K_n^{(3)}) \setminus \bigcup_{i \in [4]} e_i$ extending distinct edges $e_j$ and $e_\ell$ to monochromatic $K_4^{(3)}$s of distinct colours. This leads to the same contradiction as for \ref{basic_obs:fact1}, since the triple $uvx$ is shared by both these cliques.
\end{proof}

We finish this section by proving our main result about edge-colourings of $K_n^{(3)}$. It is worth remarking that our strategy bears similarities to that used by Mycroft for the $3$-uniform case of \cref{conj:main} (see \cite[Proposition 6 and Corollary 7]{forcinglargetight}). In analogy with our approach, the proof uses the tight component structure to define an edge-colouring of $K_n$. One then shows that, if no colour is spanning, the colouring either contains a rainbow triangle, which plays the same role as our $r$-configurations, or uses only two colours in total. In both cases, a contradiction is obtained.

\begin{proof}[Proof of \cref{thm:main_colouring}]Suppose for a contradiction that there is an edge-colouring $\chi$ of $K_n^{(3)}$ in which every edge lies in at least $\lfloor n/4\rfloor$ monochromatic $K_4^{(3)}$s and with no spanning colour. Let $V$ be the vertex set of $K_n^{(3)}$. Recall that $H_i = H_i^{\chi}$ is the $3$-graph of $i$-coloured edges, and that, for each $v \in V$, the coloured link graph of $v$ with respect to $\chi$ is denoted $L(v)$.

We modify the colouring $\chi$ by the following iterative procedure. While there exists a vertex $v \in V$ and distinct colours $i, i' \in \mathbb{N}$ such that $e(H_i), e(H_{i'}) > 0$ and $v$ is not incident with any $i$-coloured or $i'$-coloured edges, re-colour every edge of colour $i'$ with colour $i$. We continue this process until no such triple $(v, i, i')$ remains, and re-use $\chi$ to denote the resulting colouring. Note that both the absence of a spanning colour and the bound that every edge lies in $\lfloor n/4\rfloor$ monochromatic $K_4^{(3)}$s are preserved throughout the procedure. Moreover, the final colouring has the additional property that every edge is incident with all but at most one colour appearing in $\chi$. Relabelling the colours if necessary, we may therefore assume that

\begin{enumerate}[label = (*)]
    \item\label{prop:k-1coloursateachvtx} for some $k \geq 1$, the colouring satisfies $\chi :E(K_n^{(3)}) \to [k]$ and every vertex $v \in V$ is incident with at least $k-1$ distinct colours in $[k]$.
\end{enumerate}

Now we bound $k$ from above. For each vertex $v \in V$, observe that $L(v)$ is an edge-colouring of $K_{n-1}$ in which each edge lies in at least $\lfloor n/4\rfloor$ monochromatic triangles.
By \cref{lem:UB_colours}, $L(v)$ uses at most 5 distinct colours. Together with \ref{prop:k-1coloursateachvtx}, this implies $k \leq 6$. 

Since $\chi$ contains no spanning colour, for each $i \in [k]$ we may choose a vertex $v_i$ not incident with colour $i$. Observe that \ref{prop:k-1coloursateachvtx} ensures that $v_i \neq v_{i'}$ for distinct $i, i' \in [k]$. Let $S = \{v_1, \dots, v_k\}$. We have $|S| \geq 4$; if not, then $S \subseteq e$ for some $e \in K_n^{(3)}$, and thus every vertex in $S$ is incident with the colour $\chi(e) \in [k]$, contradicting the fact that $v_{\chi(e)} \in S$. Finally, note that $\chi[S]$ has no spanning colour. Indeed, if a colour $i \in [k]$ were to span $\chi[S]$, then $v_i \in S$ would be incident with colour $i$, a contradiction.

Let $T \subseteq S$ be a minimal subset (under inclusion) with the property that $|T| \geq 4$ and $\chi[T]$ has no spanning colour. Let $r\coloneqq |T|$. It is easy to see that $\chi[T]$ is an $r$-configuration (indeed, property \ref{config:1} is true by construction, whereas the minimality of $T$ ensures property \ref{config:2}). Moreover, we have $4 \leq r \leq k \leq 6$. Observe that, by \cref{obs:basic_facts}\ref{basic_obs:fact2}, every pair of distinct vertices in $V$ lies in edges of at most 3 distinct colours, and this remains true if we restrict to $\chi[T]$.  Thus, by \cref{lem:config_classif} one of the following must occur.
\begin{enumerate}[label = (\alph*)]
    \item\label{case4} $r=4$ and $\chi[T]$ is a rainbow $K_4^{(3)}$;
    \item\label{case5} $r=5$ and every colour appears at most twice in $\chi[T]$; or
    \item\label{case6} $r=6$, $\chi[T]$ contains no monochromatic $K_4^{(3)}$ and has at most $5$ edges of any given colour.
\end{enumerate}

Observe that case \ref{case4} immediately yields a contradiction by \cref{obs:basic_facts}\ref{basic_obs:fact1}. To finish the proof, we will show that both cases \ref{case5} and \ref{case6} also lead to a contradiction. These are treated separately below. First, we introduce some more useful notation and an observation. 

After relabelling the colours if necessary, and changing the subscripts of the vertices in $T$ accordingly, we may assume that $T = \{v_1, \dots, v_r\}$ and that each $v_i$ is not incident with colour $i \in [k]$ (whenever we relabel colours below, we always assume that the subscripts of the $v_i$ are adjusted to maintain this property). For each choice of distinct $h, i, j \in [r]$, let us write $K(v_hv_i v_j)$ to denote the set of vertices $x \in V$ such that $x v_hv_iv_j$ induces a monochromatic $K_4^{(3)}$. Note that in both cases \ref{case5} and \ref{case6} we have that $\chi[T]$ contains no monochromatic $K_4^{(3)}$, and so in general $K(v_h v_i v_j) \subseteq V \setminus T$. Also, let us record the following observation which will be used several times below.

\begin{enumerate}[label = ($\dagger$)]
    \item\label{prop:disjoint} if $K(e) \cap K(e') \neq \emptyset$ for distinct $e, e' \in \binom{T}{3}$ with $|e \cap e'| = 2$, then $\chi(e) = \chi(e')$. 
\end{enumerate}
Let us see why this holds. Observe that if $x \in K(e) \cap K(e')$ and $|e \cap e'| = 2$, then the edge $(e \cap e') \cup \{x\}$ lies in two distinct monochromatic $K_4^{(3)}$s, namely those induced by $e \cup \{x\}$ and $e \cup \{x'\}$. This implies that $\chi(e) = \chi((e \cap e')\cup \{x\}) = \chi(e')$, as desired.

\paragraph{Case \ref{case5}.} Here, we have $T = \{v_1, \dots, v_5\}$ and $\chi[T]$ contains at most two edges of any given colour. Recall that all the colours appearing in $\chi$ belong to $[6]$, and so all but at most two edges in $\binom{T}{3}$ use colours in $[5]$. Let $\mathcal{E} \subseteq \binom{T}{3}$ be the subset of edges taking colours in $[5]$, so that $|\mathcal{E}| \geq \binom{5}{2} - 2 \geq 8$. Then 
\[\sum_{e \in \mathcal{E}} K(e) \geq 8 \lfloor n/4 \rfloor > 2(n-5) = 2 |V\setminus T|. \]
Thus, by averaging over all vertices in $V \setminus T$, we infer the existence of some $x \in V \setminus T$ and distinct edges $e_1, e_2, e_3 \in \mathcal{E}$ with $x \in K(e_i)$ for each $i \in [3]$. 

Using the fact that $|T| = 5$ and $|e_i| = 3$, it is easy to see that for two of these three edges, say~$e_1$ and~$e_2$, we have $|e_1 \cap e_2| = 2$.
By \ref{prop:disjoint}, this implies $\chi(e_1) = \chi(e_2)$. In turn, this forces $|e_1 \cap e_3|, |e_2 \cap e_3| \leq 1$, as otherwise $e_1, e_2,$ and $e_3$ must all use the same colour again by \ref{prop:disjoint}, which contradicts \ref{case5}. The only way this can occur is if $|e_1 \cap e_3| = |e_2 \cap e_3| = 1$. 
By relabelling the colours if necessary, we may assume that $e_1 = v_1v_2v_3, e_2 = v_1 v_2 v_4$ and $e_3 = v_3v_4v_5$. However, every $e_i \in \mathcal{E}$ takes a colour in $[5]$, and so, recalling that each $v_i$ is not incident with colour $i$, we must have $\chi(e_1) = \chi(e_2) = 5$. For the same reason, we have $\chi(e_3) \in [2]$; by switching the labels of the colours $1$ and $2$ if necessary, we may assume that $\chi(e_3) = 1$.

Since $x \in K(e_1) \cap K(e_2)$, it follows that the edges $xv_1v_2, xv_1v_3, xv_2v_3, xv_1v_4, xv_2v_4$ all use colour~$5$. Similarly, $x \in K(e_3)$ implies that $xv_3v_4, xv_4v_5, xv_3v_5$ use colour $1$. We claim that $\chi(xv_1v_5) = \chi(v_1v_3v_5) = \chi(v_1v_4v_5)$. Indeed, if $\chi(xv_1v_5) \neq \chi(v_1v_3v_5)$, then $xv_1v_3v_5$ forms a rainbow $K_4^{(3)}$ as $\chi(xv_1 v_3) = 5$, $\chi(x v_3v_5) = 1$ and both $xv_1v_5$ and $v_1v_3v_5$ cannot take a colour in $\{1,5\}$ (recall that each $v_i$ is not incident with any $i$-coloured edges). For the same reason, if $\chi(x v_1 v_5) \neq \chi(v_1v_4v_5)$, then $x v_1 v_4 v_5$ forms a rainbow $K_4^{(3)}$. As either of these options yields a contradiction together with \cref{obs:basic_facts}\ref{basic_obs:fact1}, this shows that $\chi(v_1 v_3v_5) = \chi(v_1 v_4v_5) = c$ for some $c \in \{2,6\}$. 

Recall that each colour appears at most twice in $\chi [T]$, and so no other edge in $\chi[T]$ uses colour $c$. For the same reason, no edge except for $v_1v_2v_3$ and $v_1 v_2 v_4$ uses colour $5$. Since the quadruple $v_1v_2v_3v_5$ does not induce a rainbow $K_4^{(3)}$, we must then have $\chi(v_1v_2 v_5) = \chi(v_2v_3v_5)$. Similarly, the fact that $v_1 v_2 v_4 v_5$ does not induce a rainbow $K_4^{(3)}$ implies that $\chi(v_1 v_2 v_5) = \chi(v_2v_4v_5)$. But then the edges $v_1 v_2 v_5, v_2 v_3 v_5$ and $v_2 v_4 v_5$ all take the same colour, contradicting \ref{case5}.

\paragraph{Case \ref{case6}.} Here, we have $T = \{v_1, \dots, v_6\}$. We argued previously that $r \leq k \leq 6$, and so in this case we have $k = 6$. Then \ref{prop:k-1coloursateachvtx} implies that every $v \in V$ is incident with precisely $5$ colours in $\chi$. We can now define a partition $V_1 \cup \dots \cup V_6$ of $V$, where each vertex in $V_i$ is incident with all colours except for $i$. By construction, $v_i$ is not incident with $i$, and thus $v_i \in V_i$. 

Observe that
\[\sum_{e \in \binom{T}{3}} K(e) \geq \binom{6}{3} \lfloor n/4 \rfloor > 5(n- 6) = 5 |V\setminus T|.\]
Thus, by averaging over all vertices in $V \setminus T$, we infer the existence of some $x \in V \setminus T$ such that $x \in K(e_i)$ for $6$ distinct edges $e_1, \dots, e_6 \in \binom{T}{3}$. 

Let $j \in [6]$ satisfy $x \in V_j$. We claim that $v_j \in e_i$ for each $j \in [6]$. Indeed, suppose otherwise and let $e \in \binom{T}{3}$ satisfy $x \in K(e)$ and $v_j \notin e$. Then, letting $T' = (T \setminus \{v_j\}) \cup \{x\}$, we have that $\chi[T']$  contains a monochromatic $K_4^{(3)}$ (namely, $e \cup \{x\}$). However, for each colour $i \in [6]$ there is a vertex in $T'$ not incident with colour $i$, and thus $\chi[T']$ has no spanning colour. Let $T'' \subseteq T'$ be a minimal subset under inclusion such that $|T''| \geq 4$ and $\chi[T'']$ has no spanning colour. By the same argument used above, $\chi[T'']$ is an $r'$-configuration for some $4 \leq r' \leq 6$. If $r' = 4$, then we obtain a rainbow $K_4^{(3)}$ by \cref{lem:config_classif}, which yields a contradiction by \cref{obs:basic_facts}\ref{basic_obs:fact1}. If $r' = 5$, then this reproduces the setting of case \ref{case5} and so repeating the argument used there yields another contradiction. Then $r' = 6$ and so $T'' = T'$. Thus, $\chi[T']$ is a $6$-configuration containing a monochromatic $K_4^{(3)}$. By \cref{lem:config_classif}\ref{lem:config_classif:3}, $\chi[T']$ must then contain a pair of vertices lying in edges of $4$ distinct colours, which contradicts \cref{obs:basic_facts}\ref{basic_obs:fact2}. This shows that $v_j \in e_i$ for each $i \in [6]$.

Observe, by the second part of \ref{case6}, that the edges $\{e_i\}_{i \in [6]}$ cannot all use the same colour. Since $x \in K(e_i)$ for each $i \in [6]$, any pair of edges $e_i, e_{i'}$ with $\chi(e_i) \neq \chi(e_{i'})$ satisfies $|e_i \cap e_{i'}| \leq 1$ by \ref{prop:disjoint}. In particular, for any such pair we have $e_i \cap e_{i'} = \{v_j\}$. If there were three edges $e_i, e_{i'}, e_{i''}$ of distinct colours, we would then have $|e_i \cup e_{i'} \cup e_{i''}| = 7$, which contradicts the fact that $e_i, e_{i'}, e_{i''} \subseteq T$. So we may assume that the edges in $\{e_i\}_{i \in [6]}$ use precisely two colours, say $c_1$ and $c_2$. For $j \in [2]$, let $E_j$ be the set of edges in $\{e_i\}_{i \in [6]}$ taking colour $c_j$. By the earlier observation that edges from $E_1$ can only intersect those in $E_2$ at $v_j$, it follows from the inclusion-exclusion principle that \begin{equation*}
    \big| \bigcup E_1 \cup \bigcup E_2\big| =  \big|\bigcup E_1\big| + \big|\bigcup E_2 \big| - 1.
\end{equation*}
Since $E_i$ is a set of triples, if $|E_i| \geq 2$ we must have $|\bigcup E_i| \geq 4$. Thus, if $|E_1|, |E_2| \geq 2$, then the previous equation implies that $|\bigcup E_1 \cup \bigcup E_2| \geq 7$, contradicting the fact that each $e_i \subseteq T$ and $|T| = 6$. Then, by switching the labels of $c_1$ and $c_2$ if necessary, we may assume that $|E_1| = 1$. But then $|E_2| = 6 - 1 = 5$ can be easily seen to imply $|\bigcup E_2| \geq 5$. The above equation thus yields $|\bigcup  E_1 \cup \bigcup E_2| \geq 3 + 5 - 1 \geq 7$, giving another contradiction and finishing the proof. \end{proof}

\section{Bounding the number of colours}\label{sec:colorbounds}

In this section, we prove \cref{lem:UB_colours}. For better readability, we define an edge-colouring of~$K_n$ to be \define{$r$-abundant} if every edge of $K_n$ lies in at least $r$ monochromatic triangles. Thus, \cref{lem:UB_colours} states that every $\abund$-abundant colouring of $K_n$ uses at most five colours. Recall that in the proof of \cref{thm:main_colouring}, this lemma is applied to the coloured link graphs of vertices in a suitably modified colouring~$\chi$ of $K_n^{(3)}$, to obtain a strong upper bound on the number of colours used by $\chi$.  

Before sketching an outline of the proof, we define some notations and make some simple observations regarding $\abund$-abundant colourings. Given an edge-colouring of $K_n$, we use~\define{$\mathcal{C}$} to denote the set of colours used by at least one edge of $K_n$. Further, for any vertex $v \in V(K_n)$, we say that a colour $i \in \mathcal{C}$ is \define{incident} with a vertex $v$, if $v$ is incident with an $i$-coloured edge. The set of colours in $\mathcal{C}$ incident with $v$ is denoted by \define{$\mathcal{C}_v$}. Finally, for each colour $i \in \mathcal{C}_v$ and any subset $T \subseteq V(K_n)$, we use \define{$N_i(v; T)$} to denote the set of vertices in $T$ that form an $i$-coloured edge with the vertex~$v$. We use \define{$\deg_i(v; T)$} to denote the size of the set $N_i(v;T)$, and drop the set~$T$ from the notation when $T = V(K_n)$. Then, we have the following observation. 

\begin{obs} \label{obs:abund_col}
     Let $\chi$ be a $\abund$-abundant colouring of $K_n$. Then, the following hold for every vertex $v \in V(K_n)$ and for each colour $i \in \mathcal{C}_v$ incident to $v$.
    \begin{enumerate}[label = \itmarab{A}]
        \item \label{itm:AC1} For any vertex $u \in N_i(v)$, we have $\deg_i\bigl(u; N_i(v)\bigr) \geq \abund$.
        \item \label{itm:AC2} We have $\deg_i(v) \geq  \abund + 1 > n/4$, and thus $|\mathcal{C}_v| \leq 3$.
    \end{enumerate}
\end{obs}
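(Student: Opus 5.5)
Both parts follow by unpacking the definition of $\abund$-abundance, i.e.\ that every edge lies in at least $\abund$ monochromatic triangles.

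For \ref{itm:AC1}, fix $v$, a colour $i \in \mathcal{C}_v$ and a vertex $u \in N_i(v)$. Then $uv$ is an $i$-coloured edge, so it lies in at least $\abund$ monochromatic triangles $uvw$. Each such triangle contains $uv$, so it has colour $i$. Hence every such $w$ satisfies two things: $vw$ is $i$-coloured, so $w \in N_i(v)$; and $uw$ is $i$-coloured. The third vertices $w$ are distinct for distinct triangles. This gives at least $\abund$ vertices in $N_i(u) \cap N_i(v)$, which is exactly $\deg_i\bigl(u; N_i(v)\bigr) \geq \abund$.

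For \ref{itm:AC2}, take any $u \in N_i(v)$; one exists because $i \in \mathcal{C}_v$. By \ref{itm:AC1}, $N_i(v)$ contains at least $\abund$ vertices other than $u$, and it also contains $u$. So $\deg_i(v) \geq \abund + 1$.

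It remains to check that $\abund + 1 > n/4$. Write $n+1 = 4q + s$ with $0 \le s \le 3$, so that $\abund = q$. Then
\[ \abund + 1 = q+1 = \frac{n+1-s}{4} + 1 = \frac{n + 5 - s}{4} \geq \frac{n+2}{4} > \frac{n}{4}. \]

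Finally, the sets $N_i(v)$ for distinct $i \in \mathcal{C}_v$ are pairwise disjoint, since each edge $vw$ has exactly one colour. They all lie in $V(K_n)\setminus\{v\}$, which has $n-1$ vertices. If $|\mathcal{C}_v| \geq 4$, then summing over four of these colours gives
\[ n-1 \geq \sum_{i} \deg_i(v) > 4 \cdot \frac{n}{4} = n, \]
a contradiction. Hence $|\mathcal{C}_v| \le 3$.

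No step is a real obstacle; the only points needing care are the floor arithmetic and the distinctness of the third vertices $w$.
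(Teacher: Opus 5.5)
Your proof is correct and follows the paper's argument essentially step for step. For (A1), the third vertices of the $i$-coloured triangles on $uv$ all lie in $N_i(u)\cap N_i(v)$; for (A2), adding $u$ itself gives $\deg_i(v)\ge \lfloor (n+1)/4\rfloor+1$, and you additionally spell out the floor arithmetic $\lfloor (n+1)/4\rfloor+1\ge (n+2)/4>n/4$ and the disjointness/pigeonhole step giving $|\mathcal{C}_v|\le 3$, which the paper leaves implicit.
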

\begin{proof} Given any vertex $v \in V(K_n)$ and a colour $i \in \mathcal{C}_v$, let $u \in N_i(v)$ be any $i$-coloured neighbour of $v$. By the abundance property of $\chi$, there is a set $U \subseteq V(K_n)$ of at least $\abund$ vertices of $K_n$ that form $i$-coloured monochromatic triangles with the edge $uv$. Hence, we have that $U \subseteq N_i(u) \cap N_i(v)$, thereby proving \ref{itm:AC1}. For \ref{itm:AC2}, by definition of $\mathcal{C}_v$, we have $N_i(v) \neq \emptyset$ for any colour $i \in \mathcal{C}_v$. Let $u$ be a vertex in $N_i(v)$. Then, we have that $\deg_i(v) \geq  1 + \deg_i\bigl(u; N_i(v)\bigr) > \abund$ for each $i \in \mathcal{C}_v$. Finally, note that $\deg_i(v) > \abund$ implies $\deg_i(v) > n/4$ since $\deg_i(v)\in \mathbb{N}$. In turn, this gives $|\mathcal{C}_v| \leq 3$, thereby proving \ref{itm:AC2}. 
\end{proof}

The proof of \cref{lem:UB_colours}, deferred to the end of this section, relies on finding a (possibly unbalanced) blow-up of a rainbow $K_4$ in the given $\abund$-abundant colouring $\chi$ of $K_n$. Note that given an edge-colouring $\chi$ of $K_4$, a \define{blow-up} of this colouring is defined as a collection of four disjoint subsets $\{T_v\}_{v \in V(K_4)}$ of vertices such that, for each pair of vertices $u, v \in V(K_4)$, the edges between $T_u$ and $T_v$ form a $\chi(uv)$-coloured monochromatic complete bipartite graph.  

In the proof of \cref{lem:UB_colours}, we first show that if an $\abund$-abundant colouring~$\chi$ of $K_n$ uses at least six colours, then it contains a rainbow~$K_4$. This allows us to consider a vertex-maximal blow-up of this rainbow~$K_4$ in the given colouring. We will be interested in bounding the size of this blow-up, and show that it can be neither too large nor too small. To obtain these bounds, we look at certain smaller rainbow structures in the colouring~$\chi$. We show that the $\abund$-abundance property of~$\chi$, applied to the edges of these substructures, imposes conflicting constraints on the size of the blow-up, thereby proving the lemma by contradiction.  

\subsection{The existence of vertices with \texorpdfstring{$|\mathcal{C}_v| = 3$}{Cv=3}}

In order to find a rainbow $K_4$ in the given colouring $\chi$, we need to find vertices incident with exactly three colours (recall that by \ref{itm:P2}, we have $|C_v| \leq 3$). Additionally for each of these vertices, we require the existence of a rainbow triangle coloured using colours in $\mathcal{C} \setminus \mathcal{C}_v$, and such that it is \emph{transversal} with respect to the collection of sets $\{N_i(v)\}_{i \in \mathcal{C}_v}$. Here, given a collection of pairwise disjoint sets $S_1, S_2, S_3 \subseteq V(K_n)$, a triangle $T$ in $K_n$ is said to be \define{transversal} with respect to $\{S_1, S_2, S_3 \}$ if we have $|V(T) \cap S_i| = 1$ for each $i \in [3]$. The existence of such vertices having desirable properties is shown in \cref{lem:CF37needsnospan}. 

In \cref{lem:pivot_basic_props}, we first collect some useful properties satisfied by vertices incident with three colours. For any vertex~$v$ with $|\mathcal{C}_v| = 3$, and for each colour $i \in \mathcal{C}_v$, we use \define{$\gamma_i(v)$} to denote the surplus quantities $\gamma_i(v) \coloneqq \deg_i(v) - \abund$. Note that by \cref{obs:abund_col}, we have $\gamma_i(v) \geq 1$ for each vertex~$v$ and $i \in \mathcal{C}_v$. Moreover, while $\gamma_i(v)$ depends on~$v$ and its colour set~$\mathcal{C}_v$, the sum~$\sum_{i \in \mathcal{C}_v}\gamma_i(v)$ is independent of the choice of~$v$. We record this invariance in the following equation, which is a consequence of the fact that $|\mathcal{C}_v| = 3$, and will be used frequently in the proofs below.
    \begin{equation}
    \label{eqn:gamma_sum}
        \sum_{i \in \mathcal{C}_v} \gamma_i(v) = n- 1 - 3\abund = \abund + \bigl((n+1)\bmod{4}\,\bigr) - 2.
    \end{equation}

\begin{lem}
\label{lem:pivot_basic_props}
    Let $\chi$ be a $\abund$-abundant colouring of $K_n$ and let $v \in V(K_n)$ be any vertex that satisfies $|\mathcal{C}_v| = 3$. Then, the following hold for any colour $j \in \mathcal{C} \setminus\mathcal{C}_v$.
    \begin{enumerate}[label = \itmarab{P}]
        \item \label{itm:P1} For each $k \in \mathcal{C}_v$, if a vertex $u \in N_k(v)$ is incident with the colour~$j$, then there exists a $j$-coloured edge~$uw$ with $w \notin N_k(v)$. 
        \item \label{itm:P2} If $\mathcal{C}_v = \{a,b,c\}$, then any $j$-coloured edge $uw \in N_a(v) \times N_b(v)$ is contained in at least $\gamma_c(v)\geq 1$ monochromatic triangles that are transversal with respect to $\{N_a(v), N_b(v), N_c(v)\}$.
    \end{enumerate}
    In particular, for each $k \in \mathcal{C}_v$, there are at least $\gamma_k(v)$ vertices in $N_k(v)$ that are incident with the colour $j$ .
\end{lem}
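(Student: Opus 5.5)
The plan is to work inside the partition of $V(K_n)\setminus\{v\}$ into the three sets $N_a(v)$, $N_b(v)$, $N_c(v)$. Since $|\mathcal{C}_v| = 3$, these sets cover all other vertices, and their sizes are $\abund + \gamma_i(v)$. The whole argument uses only \ref{itm:AC1} and the abundance condition.

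For \ref{itm:P1}, fix $u \in N_k(v)$ incident with colour $j$, so some edge $uw$ has colour $j$. If $w \notin N_k(v)$ we are done. So suppose $w \in N_k(v)$, and take an arbitrary $j$-coloured edge $uw$. By abundance, $uw$ lies in at least $\abund$ monochromatic $j$-coloured triangles $uwz$. Each apex $z$ is a $j$-coloured neighbour of $u$, so it suffices to show that some such $z$ lies outside $N_k(v)$. Suppose instead that all $j$-coloured neighbours of $u$ lie in $N_k(v)$; then the $j$-neighbourhood of $u$ has size at least $\abund+1$ by \ref{itm:AC2}. Also $u$ has at least $\abund$ $k$-coloured neighbours inside $N_k(v)$ by \ref{itm:AC1}, and these are disjoint from its $j$-neighbours. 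Together with $u$ itself this gives $|N_k(v)| \geq 2\abund + 2$. Then \eqref{eqn:gamma_sum} forces $\gamma_k(v) \geq \abund + 2$, whereas the sum of all three $\gamma$'s is at most $\abund + 1$, with each $\gamma$ at least $1$. This is a contradiction, so some $j$-coloured edge $uw$ has $w \notin N_k(v)$. The counting here is the main check; if the crude bound falls just short, I will also count $u$'s neighbours in the other colours, since $u$ sees at most three colours.

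For \ref{itm:P2}, take a $j$-coloured edge $uw$ with $u \in N_a(v)$ and $w \in N_b(v)$. It lies in at least $\abund$ $j$-coloured triangles $uwz$, and no apex $z$ equals $v$ because $j \notin \mathcal{C}_v$. The key claim is that at most $\abund$ apexes lie in $N_a(v)\cup N_b(v)$ altogether. Indeed, if $z \in N_a(v)$, then $z \in N_j(w)$, and $w$ already has at least $\abund$ $b$-coloured neighbours in $N_b(v)$ by \ref{itm:AC1}. I would bound $|N_a(v)\cap N_j(w)|$ via a size count of $N_a(v)$ against $u$'s $a$-neighbours. Alternatively, and more cleanly, count the apexes as $n - 2 - (\text{non-}j\text{ neighbours of } u \text{ or } w)$. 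The non-$j$ neighbours of $u$ include at least $\abund$ $a$-coloured vertices in $N_a(v)$, and those of $w$ include at least $\abund$ $b$-coloured vertices in $N_b(v)$. These two sets are disjoint, which leaves at most $n - 2 - 2\abund$ candidates. Restricting the count to $N_a(v)\cup N_b(v)$ shows that at most $\gamma_a(v)+\gamma_b(v)-1$ apexes lie there. Hence at least $\abund - \gamma_a(v) - \gamma_b(v) + 1$ apexes lie in $N_c(v)$. By \eqref{eqn:gamma_sum} this quantity is at least $\gamma_c(v) + 3 - ((n+1) \bmod 4)$, which is at least $\gamma_c(v)$. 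Handling this boundary case of the arithmetic carefully is the step I expect to be the main obstacle, and I may need to use the exact residue.

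For the final assertion, fix $k$. If some vertex of $N_k(v)$ is incident with $j$, then by \ref{itm:P1} there is a $j$-edge leaving $N_k(v)$ into another class. By \ref{itm:P2}, that edge extends to at least $\gamma_\ell(v)$ transversal $j$-triangles for the third class $\ell$, and if $\ell = k$ this already yields $\gamma_k(v)$ $j$-incident vertices in $N_k(v)$. Otherwise the edge joins the other two classes, and the same reasoning applies. It remains to show that colour $j$ touches some vertex of $N_k(v)\cup\{\text{edges between the other two classes}\}$. This holds because any $j$-edge lies entirely among $N_a(v)\cup N_b(v)\cup N_c(v)$, and by \ref{itm:P1} some $j$-edge crosses between two classes, which we may then feed into \ref{itm:P2}.
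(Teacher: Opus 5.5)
Your arguments for (P1) and (P2) are correct and are essentially the paper's. For (P1) you get $|N_k(v)| \ge |N_j(u)| + \deg_k(u;N_k(v)) + 1 \ge 2\abund+2$, which contradicts the $\gamma$-sum; the paper phrases the same count as $|N_k(v)|\ge n/2$ against $|N_k(v)|<n/2$. For (P2) you exclude $u$, $w$ and their $a$- and $b$-neighbourhoods inside $N_a(v)$ and $N_b(v)$, as the paper does. Two small points:
\begin{itemize}
\item Your stated ``key claim'' that at most $\abund$ apexes lie in $N_a(v)\cup N_b(v)$ is not what you need, nor what you prove.
\item Excluding both $u$ and $w$ gives at most $\gamma_a(v)+\gamma_b(v)-2$ apexes there. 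Your $\gamma_a(v)+\gamma_b(v)-1$ is weaker but still yields $\gamma_c(v)$.
\end{itemize}

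The gap is in the final ``in particular'' paragraph. You take a $j$-edge $xy$ leaving $N_k(v)$, apply (P2), and then split into the cases ``$\ell=k$'' and ``otherwise the edge joins the other two classes''. Neither case can occur. Since $x\in N_k(v)$, the third class $\ell$ of $xy$ is never $k$, and $xy$ never joins the two classes other than $k$. Applying (P2) to $xy$ therefore only produces apexes outside $N_k(v)$. Your closing sentence reduces everything to this case, or to feeding a crossing edge into (P2), without saying which edge to use next. So as written, the chain never produces $\gamma_k(v)$ vertices of $N_k(v)$.

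The missing step is the one the paper uses.
\begin{itemize}
\item Any $j$-edge avoids $v$ because $j\notin\mathcal{C}_v$.
\item (P1) turns it into a $j$-edge crossing between two classes.
\item (P2) then gives a transversal $j$-triangle $xyz$.
\item For each $k\in\mathcal{C}_v$, this triangle has a side joining the two classes other than $k$, and that side is $j$-coloured.
\item Applying (P2) to that side gives at least $\gamma_k(v)$ apexes in $N_k(v)$, each incident with $j$.
\end{itemize}
With this replacement your proof is complete.
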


\begin{proof}
    Let $\chi$ be a $\abund$-abundant colouring of $K_n$ and let $v$ be a given vertex with $|\mathcal{C}_v| = 3$. Set $\gamma_i \coloneqq \gamma_i(v)$. Note that the sets $\{N_i(v)\}_{i \in \mathcal{C}_v}$ tripartition $V(K_n) \setminus \{v\}$. Moreover, by \ref{itm:AC2}, the following holds for each $i \in \mathcal{C}_v$.
    \[n/4 < |N_i(v)| \leq n-1 - 2 \cdot n/4 < n/2. \] 
    Now we show \ref{itm:P1}. Let the colours $j \in \mathcal{C} \setminus \mathcal{C}_v$ and $k \in \mathcal{C}_v$ be given. Let $u \in N_k(v)$ be any vertex in $K_n$ incident to the colour $j$. Suppose for a contradiction that $N_j(u) \subseteq N_k(v)$. Note that $N_k(u)$ is disjoint from $N_j(u)$, and thus by \ref{itm:AC1} and \ref{itm:AC2}, we have
    $$\bigl|N_k(v)\bigr| \geq \bigl|N_j(u)\bigr| + \deg_k\bigl(u; N_k(v)\bigr) + 1 \geq n/2. $$
    However, this contradicts the fact that $|N_k(v)| < n/2$. Thus, there exists a $j$-coloured edge $uw$ with $w \in N_\ell(v)$ for some $\ell \neq k$, as desired.

    For \ref{itm:P2}, suppose $\mathcal{C}_v = \{a,b,c\}$ and let $uw \in N_a(v) \times N_b(v)$ be a $j$-coloured edge. By \ref{itm:AC1}, the edge $uw$ can form monochromatic triangles with at most
    \[\bigl|N_a(v)\bigr| - \deg_a\bigl(u; N_a(v)\bigr) - 1 \leq \bigl|N_a(v)\bigr| - \abund - 1 = \gamma_a - 1\] vertices of $N_a(v)$ and, by the same computation, at most $\gamma_{b} - 1$ vertices of $N_{b}(v)$. Observe that equation~\eqref{eqn:gamma_sum} implies $\gamma_a + \gamma_b + \gamma_c \leq \abund + 1$. Hence, $uw$ forms monochromatic triangles with at least $\abund - \gamma_a - \gamma_b + 2 \geq \gamma_c$ vertices of $N_c(v)$. These are all transversal with respect to $\{N_a(v), N_b(v), N_c(v)\}$, as required by~\ref{itm:P2}.

    Combining \ref{itm:P1} and \ref{itm:P2} shows that there is at least one $j$-coloured transversal triangle with respect to $\{N_i(v )\}_{i \in \mathcal{C}_v}$. Then, for each $k \in \mathcal{C}_v$, the existence of $\gamma_k$~vertices in $N_k(v)$ incident with the colour~$j$ follows by an application of \ref{itm:P2} to each of the three edges of this transversal triangle.
\end{proof}

The following lemma proves the existence of vertices incident with exactly three colours. Moreover, it shows that these vertices can be chosen such that they satisfy properties desirable for the construction of a rainbow $K_4$. Note that all the lemmas so far (including \cref{lem:CF37needsnospan}) do not require the assumption that the given $\abund$-abundant colouring uses at least six colours.   

\begin{lem}
    \label{lem:CF37needsnospan}
    Let $\chi$ be a $\abund$-abundant colouring of $K_n$ with colour set $\mathcal{C}$. If $|\mathcal{C}| \geq 5$, then there exists a vertex $v \in V(K_n)$ which is incident with exactly $3$ colours. Moreover, $v$ may be chosen so that there exists another vertex $u \in V(K_n)$ with $|\mathcal{C}_u \setminus \mathcal{C}_v| \geq 2$. 
\end{lem}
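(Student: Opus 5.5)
The plan has two parts, matching the two assertions of the lemma, and both are argued by contradiction. For the first assertion, suppose $|\mathcal{C}| \geq 5$ but every vertex satisfies $|\mathcal{C}_v| \leq 2$.

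\textbf{Step 1: the case of a vertex with one colour.} If some vertex $v$ has $\mathcal{C}_v=\{a\}$, then every vertex is incident with $a$, so by assumption each vertex carries at most one further colour. For a colour $j\neq a$, let $V_j$ be the set of vertices incident with $j$. A vertex $u \in V_j$ has $N_j(u) \subseteq V_j$, and $\deg_j(u) > n/4$ by \ref{itm:AC2}, so $|V_j| > n/4+1$. The sets $V_j$ for $j \neq a$ are pairwise disjoint, hence there are at most three of them and $|\mathcal{C}|\le 4$, a contradiction.

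\textbf{Step 2: the case of a vertex with two colours.} Otherwise fix $v$ with $\mathcal{C}_v=\{a,b\}$. Then $N_a(v)$ and $N_b(v)$ partition $V(K_n)\setminus\{v\}$, every vertex of $N_a(v)$ carries $a$, and every vertex of $N_b(v)$ carries $b$. Consequently, for each colour $j\notin\{a,b\}$ the sets $V_j$ are again pairwise disjoint, avoid $v$, and have size at least $\abund+2$. Disjointness alone only gives $|\mathcal{C}|\leq 5$, so the remaining case is exactly three extra colours $j_1,j_2,j_3$.

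\textbf{Step 3: excluding exactly five colours (main obstacle).} To rule out three extra colours I would refine the size bound on $V_j$ with an argument in the spirit of \ref{itm:P1}. Take $u\in V_j\cap N_a(v)$. It has at least $\abund$ $a$-neighbours inside $N_a(v)$ by \ref{itm:AC1}, and these are disjoint from its more than $n/4$ $j$-neighbours. Now take an $a$-coloured edge $uu'$ inside $N_a(v)$. Its at least $\abund$ monochromatic triangle apices form $a$-coloured edges with both $u$ and $u'$, so they lie in $N_a(v)\setminus\bigcup_j V_j$ or in $V_j$-free parts of $N_a(v)$. I expect that summing these disjoint sets, namely the three $V_j$'s together with the $a$-only and $b$-only vertices forced by the triangles on $a$- and $b$-edges, exceeds $n-1$ once the floor in $\abund$ is tracked exactly. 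This exact counting, which the floor makes tight, is the step I am least sure of. If it fails, I would switch to proving directly that a $j$-vertex must have $j$-neighbours on both sides $N_a(v)$ and $N_b(v)$, which would force $|V_j|\gtrsim n/2$.

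\textbf{Step 4: the second assertion.} Now let $v$ have $|\mathcal{C}_v|=3$, and suppose every vertex $u$ satisfies $|\mathcal{C}_u\setminus\mathcal{C}_v|\le1$. Let $j,j'$ be two colours outside $\mathcal{C}_v$. By \ref{itm:P2} and \cref{lem:pivot_basic_props}, for each such colour there is a transversal $j$-coloured triangle $xyz$ with respect to $\{N_a(v),N_b(v),N_c(v)\}$. Its vertices then carry $j$ together with their part-colour, and by \ref{itm:AC2} they carry at most one further colour. I would replace $v$ by such a vertex $x$ that has three colours and show that some vertex, for instance one carrying $j'$, satisfies $|\mathcal{C}_u\setminus\mathcal{C}_x|\ge2$. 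This should follow because the $j$- and $j'$-vertices are separated from one another by the partition around $v$. If no single switch works, I would choose $v$ to maximise the number of colours missing at some other vertex and derive a contradiction from the same disjoint-$V_j$ counting as in Step 2.
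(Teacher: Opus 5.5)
\textbf{Step 3 is a genuine gap, and it carries the whole first assertion.} Your Steps 1 and 2 are correct. They reduce the first assertion to $\mathcal{C}_v=\{a,b\}$ with exactly three further colours $j_1,j_2,j_3$ and pairwise disjoint sets $V_{j_i}$. Step 3, however, does not go through as sketched. The claim that the apices of the $a$-coloured triangles on $uu'$ avoid $\bigcup_j V_j$ is unjustified. Such an apex only has to see colour $a$, so it may lie in $N_b(v)$ or in $N_a(v)\cap V_{j'}$ for any $j'$. That leaves no disjoint family to sum.

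The missing idea is that \emph{there is no rainbow triangle}. Suppose every vertex sees at most two colours and $xyz$ is rainbow. A vertex $d$ incident with a fourth colour must colour $dx,dy,dz$ from $\mathcal{C}_x,\mathcal{C}_y,\mathcal{C}_z$ respectively. These three pairs have no common colour, so $d$ sees three colours, a contradiction.

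From this the count closes:
\begin{enumerate}
\item Every edge between $N_a(v)$ and $N_b(v)$ has colour $a$ or $b$, so each $j_i$-coloured star (on at least $\abund+2$ vertices) lies inside one side.
\item If such a star lies in the smaller side, say $N_a(v)$, its centre also has at least $\abund$ $a$-neighbours in $N_a(v)$ by \ref{itm:AC1}. This gives $|N_a(v)|\ge 2\abund+2>n/2$, which is impossible.
\item So all three stars lie in $N_b(v)$. They are pairwise disjoint, since each vertex there already sees $b$. This gives $|N_b(v)|\ge 3\abund+6>3n/4$, contradicting $|N_a(v)|>n/4$.
\end{enumerate}

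Your fallback is actually one step from this. The \ref{itm:P1}-type count shows that a $j$-vertex $u$ in the smaller side has a $j$-neighbour $y\in N_b(v)$. What that yields, though, is the rainbow triangle $vuy$. The proof is closed by the observation above, not by a bound $|V_j|\gtrsim n/2$.

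\textbf{Step 4 has the right ingredients but skips the decisive step.} You have transversal triangles $xyz$ for $j$ and $x'y'z'$ for $j'$, with $x,x'\in N_a(v)$, $y,y'\in N_b(v)$, $z,z'\in N_c(v)$. But ``replace $v$ by a vertex $x$ that has three colours'' assumes what must be shown: $x$ is only known to see $a$ and $j$. What forces a third colour is the cross edge $xy'$. Under the contrary hypothesis its colour cannot be $j$, $j'$, or any other colour outside $\{a,b,c\}$, so it lies in $\{a,b,c\}$. Each case then gives the required pair:
\begin{enumerate}
\item If $\chi(xy')=c$, then $\mathcal{C}_x=\{a,c,j\}$ and $\{b,j'\}\subseteq\mathcal{C}_{y'}\setminus\mathcal{C}_x$.
\item If $\chi(xy')=b$, then $\mathcal{C}_x=\{a,b,j\}$, and $z'$ sees $c$ and $j'$.
\item If $\chi(xy')=a$, then $\mathcal{C}_{y'}=\{a,b,j'\}$, and $z$ sees $c$ and $j$.
\end{enumerate}
This is exactly the paper's case analysis, with $u_1w_2$ in place of $xy'$, and it makes your maximisation fallback unnecessary.
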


\begin{proof}
    Let $\chi$ be a $\abund$-abundant colouring of $K_n$ with colour set $\mathcal{C}$, and let $|\mathcal{C}| \geq 5$. To prove the first part of the lemma, suppose for a contradiction that $|\mathcal{C}_v| \leq 2$ for every vertex $v \in V(K_n)$. 

     We start by claiming that $\chi$ contains no rainbow triangle. Suppose otherwise; then by relabelling the colours if necessary, we may assume that $\chi$ contains a triangle $abc$ with $\chi(ab) = 1$, $\chi(bc) = 2$, and $\chi(ac) = 3$. Since $|\mathcal{C}| \geq 5$, there is a vertex $d \notin \{a,b,c\}$ incident with a colour not in $[3]$. As $|\mathcal{C}_v| \leq 2$ for each $v \in \{a,b,c\}$, the edges $da$, $db$, and $dc$ must use at least two distinct colours in $[3]$. This implies $|\mathcal{C}_d| \geq 3$, giving a contradiction.
    
    Thus, we may assume that $\chi$ contains no rainbow triangle. Now, let $x \in V(K_n)$ be some vertex incident with exactly two colours, say $\mathcal{C}_{x} = \{1, 2\}$. Note that such a vertex must exist, as otherwise the colouring $\chi$ would be monochromatic. Without loss of generality, we may assume that $|N_1(x)| \leq |N_2(x)|$. Then by \ref{itm:AC2}, we have that $n/4 < |N_1(x)| \leq n/2$ and $n/2 \leq |N_2(x)| < 3n/4$. 
    
    Observe, using \ref{itm:AC2}, that $\chi$ contains an $\ell$-coloured star on at least $\abund + 2$ vertices for each colour $\ell \in \mathcal{C}$. However, as there are no rainbow triangles, every edge between $N_1(x)$ and $N_2(x)$ uses either colour $1$ or $2$. This implies that any such $\ell$-coloured star with $\ell \notin \{1,2\}$ is either entirely contained in $N_1(x)$ or in $N_2(x)$. Together with \ref{itm:AC1} applied to the centre of the star in $N_1(x)$, the first of these options implies $|N_1(x)| \geq 2\abund + 2 > n/2$. This gives a contradiction, and so all these monochromatic stars must be contained in $N_2(x)$. 
    
    As $|\mathcal{C}| \geq 5$, there are three monochromatic stars of distinct colours not in $[2]$, each on at least $\abund + 2$ vertices and contained entirely within $N_2(x)$. Observe that these stars cannot all be pairwise vertex-disjoint, as otherwise \(|N_2(x)| \geq 3 \abund + 6 > 3n/4\), a contradiction. Hence, at least two of these stars intersect, and so any vertex~$u$ in their intersection must satisfy $|\mathcal{C}_u| \geq 3$. This proves the first part of the lemma.
    
    Now let~$v \in V(K_n)$ be a vertex incident with three colours, say $\mathcal{C}_v = [3]$. If there exists a vertex $u \in V(K_n)$ with $|\, \mathcal{C}_u \setminus \mathcal{C}_v| \geq 2$, then the second part of the lemma holds. Hence, we may assume that no such vertex exists. Let $k$ and $\ell$ be any two colours in $\mathcal{C} \setminus \mathcal{C}_v$. Then, by \cref{lem:pivot_basic_props}, there exist vertices $u_1, w_1 \in N_1(v)$ and $u_2, w_2 \in N_2(v)$ such that the edges $u_1u_2$ and $w_1w_2$  are coloured $k$ and $\ell$, respectively. Note that if $u_1 = w_1$ we have $\{\ell, k \} \subseteq \mathcal{C}_{u_1} \setminus \mathcal{C}_v$, a contradiction. Thus, $u_1 \neq w_1$ and, for the same reason, $u_2 \neq w_2$.
    
    Next, consider the edge $u_1w_2$. By the choice of the vertices $u_1$ and $w_2$, and by our assumption, it follows that the edge~$u_1w_2$ uses a colour in $[3]$. If $\chi(u_1w_2) = 3$, then we have $\mathcal{C}_{u_1} = \{1,3,k\}$ and $\mathcal{C}_{w_2} = \{ 2, 3, \ell\}$, thereby proving the lemma. If not, then without loss of generality we have $\chi(u_1w_2) = 1$ and so $\mathcal{C}_{w_2} = \{1,2,\ell\}$. Then, by \cref{lem:pivot_basic_props}, there exists a vertex in $ N_3(v)$ that is incident with colours $\{3, k\} \subseteq \mathcal{C} \setminus \mathcal{C}_{w_2}$, as desired.  
\end{proof}

\subsection{Finding a rainbow \texorpdfstring{$K_4$}{K4}}

Next, we use \cref{lem:CF37needsnospan} to prove that a $\abund$-abundant colouring~$\chi$ contains a rainbow $K_4$, under the assumption that $\chi$ uses at least six colours. This is done in \cref{lem:symmetric_four}. For the proof of \cref{lem:symmetric_four}, we first require a counting lemma (\cref{lem:technical_lemma} below) that relies on the abundance property of~$\chi$ applied to the edges of a particularly placed rainbow path~$P_4$.  

In order to accompany the proofs with figures, we switch notation for the remainder of this section, and use letters instead of numbers to denote colours in $\mathcal{C}$. Specifically, we use $R$, $G$, and $B$ to denote colours shown as red, green, and blue; and $D$, $W$, and $Y$ to denote colours represented by dashed, wavy, and solid black edges. We will often use the first three for the coloured neighbourhoods of a vertex $v$ with $|\mathcal{C}_v| = 3$, whereas the others will play the role of the colours in $\mathcal{C} \setminus \mathcal{C}_v$. The colours have been chosen to highlight the slightly varied, but inherently symmetric, roles played by the colours in $\mathcal{C}_v$ and $\mathcal{C}\setminus \mathcal{C}_v$ in the following proofs. Finally, for any set of colours $\{X,Y,Z\} \subseteq \mathcal{C}$, we use \define{$V_{XYZ}$} to denote the set of vertices $v \in V(K_n)$ for which $\mathcal{C}_v = \{X,Y,Z\}$.

\vspace{10pt}
\begin{lem}\label{lem:technical_lemma}
    Let $\chi$ be a $\abund$-abundant colouring of $K_n$ with colour set $\mathcal{C}$. Suppose $|\mathcal{C}| \geq 6$ and let $v\in V(K_n)$ be a vertex satisfying $\mathcal{C}_v = \{R,G,B\}$. Suppose that there exist vertices $u, u' \in N_R(v)$ and $w, w' \in N_B(v)$ such that the path $\mathcal{P} = u' w u w'$ has the following properties.
    \begin{itemize}
        \item  $\mathcal{P}$ forms a rainbow path using colours not in $\{R,G,B\}$.
        \item There are $t$ vertices in $N_G(v)$ that do not form monochromatic triangles with any edge of $\mathcal{P}$.
    \end{itemize}
    Then, there exist at least $\gamma_G(v) + t + 2$ vertices in $N_G(v)$, each of which forms a monochromatic triangle with both the edges~$u'w$ and~$uw'$ of $\mathcal{P}$.
\end{lem}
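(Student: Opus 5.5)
The approach is to count, inside $N_G(v)$, the vertices completing monochromatic triangles on the three edges of $\mathcal{P}$. Disjointness forced by shared endpoints will then push the triangle sets of the two outer edges $u'w$ and $uw'$ to overlap heavily. Write $m\coloneqq\abund$ and $\gamma_X\coloneqq\gamma_X(v)$ for $X\in\{R,G,B\}$. Label the edges $e_1=u'w$, $e_2=wu$, $e_3=uw'$. For each $e_i$, let $K_i\subseteq V(K_n)$ be the set of vertices forming a monochromatic triangle with $e_i$, so $|K_i|\ge m$ by abundance.

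First I record the basic facts. Since $vu,vu'\in R$ and $vw,vw'\in B$ while the $e_i$ avoid $\{R,G,B\}$, we have $v\notin K_i$. Hence each $K_i$ is contained in $N_R(v)\cup N_G(v)\cup N_B(v)$. Consecutive edges share an endpoint and have distinct colours, so $K_1\cap K_2=\emptyset$ (they share $w$) and $K_2\cap K_3=\emptyset$ (they share $u$); otherwise the edge from the common vertex to $w$ (resp.\ $u$) would carry two colours.

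The main step is a sharper bound than the one in \ref{itm:P2} on how much of $K_1\cup K_2\cup K_3$ lies in $N_R(v)\cup N_B(v)$. Applying \ref{itm:P2} naively to each edge separately gives only $\gamma_R+\gamma_B-2$ per edge, and that turns out to be too weak. Instead I use the shared endpoints. Any $z\in K_i\cap N_R(v)$ with $u\in e_i$ has $\chi(uz)=\chi(e_i)\ne R$, so $z\in N_R(v)\setminus(\{u\}\cup N_R(u))$. By \ref{itm:AC1} this set has size at most $\gamma_R-1$. Since $K_2,K_3$ are disjoint and both contain $u$, we get $|K_2\cap N_R(v)|+|K_3\cap N_R(v)|\le\gamma_R-1$. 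In the same way, using $w$ for $e_1,e_2$, we get $|K_1\cap N_B(v)|+|K_2\cap N_B(v)|\le\gamma_B-1$. Finally, using $u'$ and $w'$ alone, $|K_1\cap N_R(v)|\le\gamma_R-1$ and $|K_3\cap N_B(v)|\le\gamma_B-1$. Summing gives $\sum_i|K_i\setminus N_G(v)|\le 2(\gamma_R+\gamma_B)-4$, hence with $A_i\coloneqq K_i\cap N_G(v)$,
\[
|A_1|+|A_2|+|A_3|\ \ge\ 3m-2\gamma_R-2\gamma_B+4 .
\]

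To finish, $A_2$ is disjoint from $A_1\cup A_3$, and the $t$ vertices of $N_G(v)$ lying in no triangle with $\mathcal{P}$ avoid all $A_i$. Since $|N_G(v)|=m+\gamma_G$, this gives $|A_1\cup A_3|+|A_2|+t\le m+\gamma_G$. Inclusion–exclusion then yields
\[
|A_1\cap A_3|\ \ge\ \sum_i|A_i|+t-m-\gamma_G\ \ge\ 2m-2(\gamma_R+\gamma_B+\gamma_G)+\gamma_G+t+4 .
\]
By \eqref{eqn:gamma_sum} we have $\gamma_R+\gamma_B+\gamma_G\le m+1$, so the right-hand side is at least $\gamma_G+t+2$, as claimed. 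The only delicate point is the pairing of the edges in the second paragraph: pair $e_2,e_3$ at $u$ for $N_R(v)$, and pair $e_1,e_2$ at $w$ for $N_B(v)$. This pairing is what saves the needed factor.
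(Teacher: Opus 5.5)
Your proof is correct and is essentially the paper's argument. The paper also lower-bounds each triangle set inside $N_G(v)$ by $\abund$ minus the same-coloured degrees of the endpoints within their own parts, pairs those terms at the shared endpoints $u$ (inside $N_R(v)$) and $w$ (inside $N_B(v)$) via \ref{itm:AC1}, and then finishes with the disjointness of $M_{wu}$ from $M_{u'w}\cup M_{uw'}$, inclusion--exclusion and \eqref{eqn:gamma_sum}. One wording slip: it is the edges $e_2,e_3$ that both contain $u$, not the sets $K_2,K_3$.
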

\begin{proof}
Set $\gamma_i \coloneqq \gamma_i(v)$ for each $i \in \{R,G,B\}$. Suppose that $\chi(u'w) = Y$, $\chi(wu) = W$, and $\chi(u w') = D$ (see \cref{fig:GYD}). For each $x \in \{u, u', w, w'\}$ and $c \in \{D, W, Y\}$, let $\sigma_c(x)$ denote the number of $c$-coloured edges incident with $x$ which lie entirely within the set $N_i(v)$ containing the vertex $x$. That is, if $x \in N_i(v)$, then $\sigma_c(x) = \deg_c\bigl(x; N_i(v)\bigr)$. Then by~\ref{itm:AC1}, we have the following inequalities. 
$$\sigma_Y(u') < \gamma_R\,; \;\; \sigma_Y(w) + \sigma_W(w) <  \gamma_B\,; \;\;  \sigma_W(u) + \sigma_D(u) < \gamma_R\,; \; \text{ and }\;\; \sigma_W(w') < \gamma_B.$$
Here, the second and third inequality use the fact that, for instance, $N_Y(w)$ and $N_W(w)$ are disjoint.

For each edge $e \in \{u'w, wu, uw'\}$, let $M_e$ denote the set of vertices in $N_G(v)$ that form monochromatic triangles with the edge $e$. Observe that, as $wu$ intersects both $u'w$ and $uw'$ while using a distinct colour from both of them, we have $M_{wu} \cap (M_{u'w} \cup M_{uw'}) = \emptyset$. Moreover, as $\chi(wu) = W$ and $M_{wu} \subseteq N_G(v)$ by definition, it follows from the $\abund$-abundance property that
    \(|M_{wu}| \geq \abund - \sigma_W(u) - \sigma_W(w)\). Recall from the assumptions of the lemma that there are $t$ vertices in $N_G(v)$ that do not belong to $M_{u'w} \cup M_{wu} \cup M_{uw'}$. Thus we have,
\begin{eqnarray*}    
|M_{u'w} \cup M_{uw'}| &\leq& |N_G(v)| - t - |M_{wu}| = \abund + \gamma_G - t - |M_{wu}| \\ &\leq& \gamma_G + \sigma_W(u) + \sigma_W(w)- t. 
\end{eqnarray*}
On the other hand, analogous to $M_{wu}$, we have $|M_{u'w}| \geq  \abund - \sigma_Y(u') - \sigma_Y(w)$ and $|M_{uw'}| \geq \abund - \sigma_D(u) - \sigma_D(w')$. The lemma now follows by combining all the above inequalities, to return the lower bound
    \begin{eqnarray*}
        \bigl|M_{u'w}&\cap&M_{uw'}\bigr|= \bigl|M_{u'w}\bigr| + \bigl|M_{uw'}\bigr| - \bigl|M_{u'w} \cup M_{uw'}\bigr|\\
        &\geq& 2 \abund + t - \gamma_G - \bigl(\sigma_Y(u') + \sigma_Y(w) + \sigma_W(w) + \sigma_W(u) + \sigma_D(u) +\sigma_D(w')\bigr)\\
        &\geq&  2 \abund + t - \gamma_G - 2 \gamma_B - 2\gamma_R + 4\\
        &=& 2\Bigl(\abund - \gamma_G - \gamma_B -\gamma_R + 2\Bigr) + \gamma_G + t\: \geq\: \gamma_G + t + 2,
    \end{eqnarray*}
    where in the last inequality we used $\gamma_G + \gamma_B + \gamma_R \leq \abund + 1$ by equation~\eqref{eqn:gamma_sum}.
\end{proof}

We now prove the existence of a rainbow $K_4$.
%under the assumption that $\chi$ uses at least six colours.

               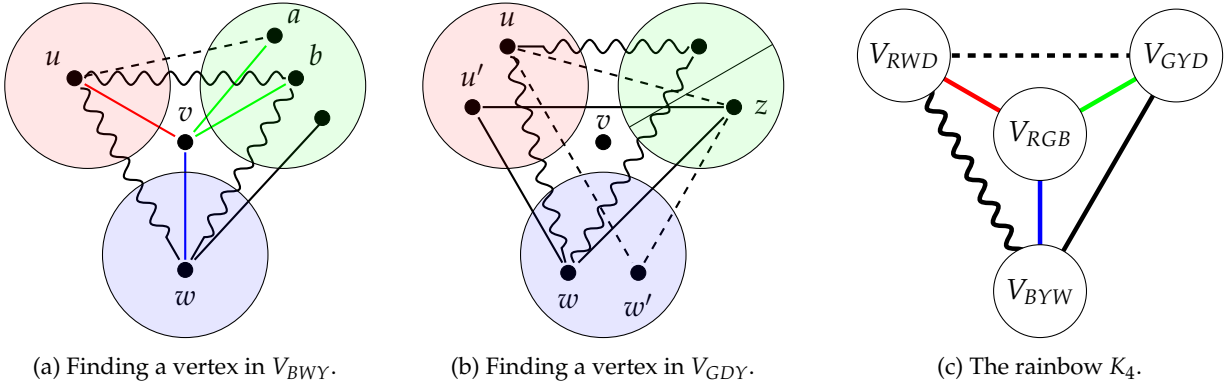
\begin{figure}[t]
    \centering
    \begin{subfigure}[b]{0.3\textwidth}
    \centering
    \begin{tikzpicture}[baseline]
        \node (x) at (150:1.7) {};
        \node (b) at (30:1.7) {};
        \node (c) at (270:1.7) {};
        \node (a) at (50:1.85) {};
        \node (o) at (0,0) {};
        \node (null) at (10:1.85) {};
        
        \node[above = 1.4mm] at (o) {$v$};
        \node[above left = 0.5mm] at (x) {$u$};
        \node[above right = 0.5mm] at (b) {$b$};
        \node[above right = 0.5mm] at (a) {$a$};
        \node[below = 1.4mm] at (c) {$w$};

        \draw[thick, red] (o) -- (x);
        \draw[thick, blue] (o) -- (c);
        \draw[thick, green] (o) -- (b);
        \draw[thick, green] (o) -- (a);
        \draw[thick, decorate,decoration={coil,aspect=0}] (x) -- (c);
        \draw[thick, decorate,decoration={coil,aspect=0}] (x) -- (b);
        \draw[thick, decorate,decoration={coil,aspect=0}] (b) -- (c);
        \draw[thick, black] (c) -- (10:1.9);
        \draw[thick, black, dashed] (x) -- (a);
        
        \node[draw] at (a) [circle,fill,inner sep=2pt]{};
        \node[draw] at (x) [circle,fill,inner sep=2pt]{};
        \node[draw] at (c) [circle,fill,inner sep=2pt]{};
        \node[draw] at (b) [circle,fill,inner sep=2pt]{};
        \node[draw] at (o) [circle,fill,inner sep=2pt]{};
        \node[draw] at (null) [circle,fill,inner sep=2pt]{};
        
        \filldraw[ fill = red, fill opacity = 0.1] (150:1.5) circle (1.1);
        \filldraw[ fill = green, fill opacity = 0.1] (30:1.5) circle (1.1);
        \filldraw[ fill = blue, fill opacity = 0.1] (270:1.5) circle (1.1);

        \end{tikzpicture}
        \caption{Finding a vertex in $\Vb$.}
        \label{fig:BYW}
        \end{subfigure}
        \hfill
        \begin{subfigure}[b]{0.30\textwidth}
        \centering
        \begin{tikzpicture}[grow = right, baseline]
        
        \node (x) at (135:1.8) {};
        \node (z) at (15:1.8) {};
        \node (c) at (255:1.8) {};
        \node (d) at (165:1.8) {};
        \node (w) at (45:1.8) {};
        \node (y) at (285:1.8) {};
        \node (o) at (0,0) {};
        
        \node[above = 0.4mm] at (o) {$v$};
        \node[above = 1.4mm] at (x) {$u$};
        \node[right = 1.4mm] at (z) {$z$};
        \node[below = 1.4mm] at (c) {$w$};
        \node[above = 1.4mm] at (d) {$u'$};
        \node[below = 1.4mm] at (y) {$w'$};

        %\draw[ red] (o) -- (x);
        %\draw[blue] (o) -- (y);
        %\draw[green] (o) -- (z);
        %\draw[red] (o) -- (d);
        %\draw[blue] (o) -- (c);
        %\draw[green] (o) -- (w);
        \draw[thick, decorate,decoration={coil,aspect=0}] (x) -- (c);
        \draw[thick, decorate,decoration={coil,aspect=0}] (c) -- (w);
        \draw[thick, decorate,decoration={coil,aspect=0}] (w) -- (x);
        \draw[thick, black] (c) -- (d);
        \draw[thick, black] (c) -- (z);
        \draw[thick, black] (d) -- (z);
        \draw[thick, black, dashed] (x) -- (z);
        \draw[thick, black, dashed] (y) -- (z);
        \draw[thick, black, dashed] (x) -- (y);
        \draw[black] (30:0.4) -- (30:2.6);
        
        \node[draw] at (x) [circle,fill,inner sep=2pt]{};
        \node[draw] at (y) [circle,fill,inner sep=2pt]{};
        \node[draw] at (z) [circle,fill,inner sep=2pt]{};
        \node[draw] at (w) [circle,fill,inner sep=2pt]{};
        \node[draw] at (c) [circle,fill,inner sep=2pt]{};
        \node[draw] at (d) [circle,fill,inner sep=2pt]{};
        \node[draw] at (o) [circle,fill,inner sep=2pt]{};
        
        \filldraw[ fill = red, fill opacity = 0.1] (150:1.5) circle (1.1);
        \filldraw[ fill = green, fill opacity = 0.1] (30:1.5) circle (1.1);
        \filldraw[ fill = blue, fill opacity = 0.1] (270:1.5) circle (1.1);

        \end{tikzpicture}
        \caption{Finding a vertex in $\Vg$.}
        \label{fig:GYD}
        \end{subfigure}
        \hfill
        \begin{subfigure}[b]{0.33\textwidth}
        \centering
        \begin{tikzpicture}[grow = right, baseline,scale = 0.95]

        \node (RDW) at (150:2.2) {};
        \node (GYD) at (30:2.2) {};
        \node (BYW) at (270:2.2) {};
        \node (RGB) at (0,0) {};
  
        \draw[line width = 0.6mm, decorate,decoration={coil,aspect=0}] (RDW) -- (BYW);
        \draw[line width = 0.6mm, red] (RDW) -- (RGB);
        \draw[line width = 0.6mm, blue] (BYW) -- (RGB);
        \draw[line width = 0.6mm, green] (GYD) -- (RGB);
        \draw[line width = 0.6mm, black] (GYD) -- (BYW);
        \draw[line width = 0.6mm, black, dashed] (GYD) -- (RDW);

        \draw[fill = white] (150:2.2) circle (0.65);
        \draw[fill = white] (30:2.2) circle (0.65);
        \draw[fill = white] (270:2.2) circle (0.65);
        \draw[fill = white] (0,0) circle (0.65);

        \node at (RGB) {$V_{RGB}$};
        \node at (RDW) {$V_{RWD}$};
        \node at (GYD) {$V_{GYD}$};
        \node at (BYW) {$V_{BYW}$};

        \end{tikzpicture}
        \caption{The rainbow $K_4$.}
        \label{fig:4setstruct}
        \end{subfigure}
    \caption{The structures involved in the proof of \cref{lem:symmetric_four}.}
    %\label{fig:6-col linkG structure}
\end{figure}

\begin{lem}
\label{lem:symmetric_four}
     Let $\chi$ be a $\abund$-abundant colouring of $K_n$ with colour set $\mathcal{C}$. If $|\mathcal{C}| \geq 6$, then the colouring~$\chi$ contains a rainbow $K_4$. 
\end{lem}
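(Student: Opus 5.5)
The rainbow $K_4$ we aim for is the one in \cref{fig:4setstruct}, with one vertex from each of $\Vberg$, $\Vr$, $\Vb$ and $\Vg$. We build it step by step, following the three panels of the figure.

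\textbf{Step 1: a starting vertex and two outside colours.} Since $|\mathcal{C}| \geq 6 \geq 5$, \cref{lem:CF37needsnospan} gives a vertex $v$ with $\mathcal{C}_v = \{R,G,B\}$ and a vertex whose colour set meets $\mathcal{C}\setminus\{R,G,B\}$ in at least two colours. This yields two outside colours that occur together at a single vertex. Combining this with \cref{lem:pivot_basic_props}\ref{itm:P1}--\ref{itm:P2}, we locate, relative to the tripartition $\{N_R(v),N_G(v),N_B(v)\}$, a monochromatic transversal triangle $ubw$ of some outside colour $W$, with $u \in N_R(v)$, $b\in N_G(v)$ and $w \in N_B(v)$.

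\textbf{Step 2: the vertex $w \in \Vb$.} By \ref{itm:AC2} every vertex sees at most three colours. The aim is to arrange, using the freedom in choosing the triangle together with the second outside colour $D$ given by \cref{lem:CF37needsnospan}, that $u$ has an edge of colour $D$ to $N_G(v)$ and $w$ has an edge of a third outside colour $Y$. Then $\mathcal{C}_u=\{R,W,D\}$ and $\mathcal{C}_w=\{B,W,Y\}$, so $u\in\Vr$ and $w\in\Vb$ (\cref{fig:BYW}). Here $|\mathcal{C}|\geq 6$ is used: it guarantees three distinct outside colours $D,W,Y$, and \cref{lem:pivot_basic_props} guarantees that each is present in every part $N_k(v)$. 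Consequently $\chi(vu)=R$, $\chi(vw)=B$ and $\chi(uw)=W$.

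\textbf{Step 3: the vertex in $\Vg$, the main obstacle.} We need $z\in N_G(v)$ with $\chi(zu)=D$ and $\chi(zw)=Y$. Such a $z$ automatically has $\mathcal{C}_z=\{G,D,Y\}$, and then $vuwz$ is a rainbow $K_4$ coloured $R,B,G,W,D,Y$. To find $z$, form the rainbow path $\mathcal{P}=u'wuw'$ with $u'\in N_R(v)$, $w'\in N_B(v)$, $\chi(u'w)=Y$ and $\chi(uw')=D$. Applying \cref{lem:technical_lemma} with $t\ge 0$ gives at least $\gamma_G(v)+2\ge 3$ vertices $z\in N_G(v)$ forming monochromatic triangles with both $u'w$ and $uw'$. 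Each such $z$ satisfies $\chi(zw)=Y$ and $\chi(zu)=D$, which is exactly what we need (\cref{fig:GYD}).

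The delicate points are producing $u'$ and $w'$ inside the correct parts, and ensuring that $Y\neq D$ and that both differ from $W$. For this I would use \ref{itm:P1}: a $Y$-edge at $w$ must leave $N_B(v)$. I would also use the three-colour bound at $w$ and $u$, which forces these edges to land in $N_R(v)$ and $N_B(v)$ rather than in $N_G(v)$. If an edge lands in the wrong part, swap the roles of $R$ and $B$, or rotate the labels.

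I expect this case analysis in Steps 2--3 to be the main difficulty, namely pinning down where the outside-coloured edges at $u$ and $w$ go so that the path $\mathcal{P}$ exists. Once the path is in place, \cref{lem:technical_lemma} finishes the argument immediately.
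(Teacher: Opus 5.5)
\textbf{Gap: Step 2 is asserted, not proved.} Your outline has the same shape as the paper's proof: the same $v$, $u\in\Vr$, $w\in\Vb$, the same path $u'wuw'$, and the same use of \cref{lem:technical_lemma} to produce $z$. The step that carries the real content, however, is missing. In Step 2 you only state the aim of finding $w\in N_B(v)$ with $\chi(uw)$ an outside colour of $u$ and $w$ incident with a third outside colour $Y$. Nothing you cite produces such a $w$.

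\cref{lem:pivot_basic_props} tells you two things. First, $u$ has outside-coloured neighbours in $N_B(v)$. Second, at least $\gamma_B$ vertices of $N_B(v)$ see $Y$ (write $\gamma_i=\gamma_i(v)$). These two sets need not meet: $|N_B(v)|=\abund+\gamma_B$, so up to $\abund$ vertices there may avoid $Y$. Fixing one $W$-coloured transversal triangle, or even one $W$-edge $ub$ with $b\in N_G(v)$, does not help. Such an edge is only guaranteed $\abund-\gamma_R-\gamma_G+2\le\abund$ triangle vertices in $N_B(v)$, which is too few to force a $Y$-incident one.

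\textbf{The missing counting argument.} The paper uses both outside colours at $u$ at once. Take $a,b\in N_G(v)$ with $\chi(ua)=D$ and $\chi(ub)=W$; these exist by \ref{itm:P1} and \ref{itm:P2}. Let $M$ be the set of vertices forming a monochromatic triangle with $ua$ or $ub$. No vertex does both, so $|M|\ge 2\abund$. By \ref{itm:AC1}, $|M\cap N_R(v)|\le\gamma_R-1$ and $|M\cap N_G(v)|\le 2\gamma_G-2$.

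You also need a normalisation you never make: choose which of the two parts avoiding $u$ is called $N_G(v)$ so that $\gamma_G\le\gamma_B$. Then \eqref{eqn:gamma_sum} gives
\[
|M\cap N_B(v)|\ \ge\ \abund+2\ >\ |N_B(v)|-\gamma_B .
\]
Hence some $w\in M\cap N_B(v)$ is incident with $Y$, and $\chi(uw)\in\{D,W\}$; rename $D,W$ if necessary. Without this argument or an equivalent one, Step 2, and hence the proof, is incomplete.

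\textbf{Step 3: routine, but your justification is wrong.} The part you flag as the main difficulty is routine, but the reason you give for it does not hold. The three-colour bound does not keep the $Y$-edges at $w$ out of $N_G(v)$; indeed, the vertex $z$ you are looking for is exactly such a $Y$-neighbour. The correct argument is to apply \ref{itm:P1} to get a $Y$-edge leaving $N_B(v)$, then \ref{itm:P2} to get a transversal $Y$-triangle through $w$. That triangle supplies $u'\in N_R(v)$, and the same argument gives $w'\in N_B(v)$. Swapping the roles of $R$ and $B$ at that stage is not available anyway, since $u$ and $w$ are already placed.
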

\begin{proof}
    Let $\chi$ be an $\abund$-abundant colouring of $K_n$ with colour set $\mathcal{C}$ having $|\mathcal{C}| \geq 6$. By \cref{lem:CF37needsnospan}, there exists a vertex $v \in V(K_n)$ incident with three colours in $\chi$, say $\mathcal{C}_v = \{R,G,B\}$. Moreover, the vertex $v$ may be chosen so that, for some vertex $u \in V(K_n)$, we have $|\mathcal{C}_u \setminus \mathcal{C}_v| \geq 2$. Let $D$, $Y$, and $W$ be three colours in $\mathcal{C}\setminus \{R,G,B\}$. Then, by possibly relabelling the colours, we may assume that the vertex~$u$ lies in $N_R(v)$ and that $\mathcal{C}_u = \{R,D,W\}$. For each $i \in \mathcal{C}_v$, recall that $\gamma_i \coloneqq \gamma_i(v)= |N_i(v)| - \abund$. Again by relabelling if necessary, we may assume that $|N_G(v)| \leq |N_B(v)|$, and thus $\gamma_G \leq \gamma_B$.
    
    Consider the vertex $u \in N_R(v)$. By \cref{lem:pivot_basic_props}, there exist distinct vertices $a,b \in N_G(v)$ such that the edges $ua$ and $ub$ are coloured $D$ and $W$, respectively (see \cref{fig:BYW} for reference). Let $M \subseteq V(K_n)$ be the set of vertices in $K_n$ that form a monochromatic triangle with either $ua$ or $ub$. Note that no vertex $m \in M$ forms a monochromatic triangle with both these edges, as the edge $um$ can only take one of these colours. Hence, we have $|M| \geq 2\abund$. 
    
    By an application of \ref{itm:AC1} to the vertex $u \in N_R(v)$, we have \[\bigl|M \cap N_R(v)\bigr| \leq \bigl|N_R(v)\bigr| - \deg_R\bigl(u; N_R(v)\bigr) - 1 \leq  \gamma_R-1.\] By a similar computation for the vertices $a, b \in N_G(v)$, and by using the assumption that $\gamma_G \leq \gamma_B$, it follows that $|M \cap N_G(v)| \leq 2\gamma_G - 2 \leq \gamma_G + \gamma_B -2$. Combining these inequalities with \eqref{eqn:gamma_sum} yields
    \begin{eqnarray*}
        |M \cap N_B(v)| &=& |M| - |M\cap N_R(v)| -|M\cap N_G(v)| \\ &\geq& 2 \abund - (\gamma_R + \gamma_G + \gamma_B) + 3 \geq \abund + 2.
    \end{eqnarray*}
    However, by \cref{lem:pivot_basic_props}, the colour $Y$ is incident with at least $\gamma_B$ vertices of $N_B(v)$. Hence, as $|N_B(v)| = \abund +\gamma_B$, there exists a vertex $w \in M \cap N_B(v)$ that is incident to the colour~$Y$. By possibly relabelling the colours $D$ and $W$ (and switching the labels of $a$ and $b$), we may assume that~$w$ is incident with the colours $\mathcal{C}_w = \{B, W, Y\}$.

    Now, consider the $W$-coloured edge $uw$. By an application of \cref{lem:pivot_basic_props} for the colours $D, Y \notin \mathcal{C}_v$, there exist vertices $u' \in N_B(v)$ and $w' \in N_R(v)$ such that the edges $u'w$ and $uw'$ are coloured $Y$ and $D$, respectively. Note that $u'wuw'$ is a rainbow $\{Y,W,D\}$-coloured path, alternating between the sets $N_R(v)$ and $N_B(v)$ (see \cref{fig:GYD} for reference). Applying \cref{lem:technical_lemma} to this rainbow path shows that, for some $t \geq 0$, there are at least $\gamma_G + t + 2 $ vertices of $N_G(v)$ that form monochromatic triangles with both $u'w$ and $uw'$. Then, any such vertex $z$ satisfies $\mathcal{C}_z = \{G, Y, D\}$. 
    
    Summarising, we have shown that there exists a set of four vertices $\{v, u, w, z\}$ in $K_n$, such that we have $v \in \Vberg$, $u \in \Vr$, $w \in \Vb$, and $z \in \Vg$. Recall that here, $V_{ijk}$ is the set of vertices incident with exactly the colours $i$, $j$, and $k$. The colours of all the edges between these vertices are forced, and in fact they must form a rainbow $K_4$, as shown in \cref{fig:4setstruct}. \end{proof}

\subsection{Proof of \texorpdfstring{\cref{lem:UB_colours}}{Lemma \ref{lem:UB_colours}}}

We are now ready to prove~\cref{lem:UB_colours}. Without loss of generality, the vertices of the rainbow $K_4$ returned by \cref{lem:symmetric_four} belong to the vertex sets $\Vberg$, $\Vr$, $\Vg$, and $\Vb$, which themselves form a blow-up of a rainbow $K_4$ (see \cref{fig:4setstruct}). As outlined above, we obtain bounds on the size of this blow-up by applying the abundance property of $\chi$ to the edges of various substructures, such as a rainbow triangle and certain rainbow $P_4$s. Finally, we show that these structures impose mutually exclusive constraints on the size of the blow-up, thereby leading to a contradiction.   

\begin{proof}[Proof of \cref{lem:UB_colours}]
    Let $\chi$ be a $\abund$-abundant colouring of $K_n$ with colour set $\mathcal{C}$. Suppose for a contradiction that $|\mathcal{C}| \geq 6$. By \cref{lem:symmetric_four}, the colouring~$\chi$ contains a rainbow~$K_4$. Furthermore, as $|\mathcal{C}_v| \leq 3$ for any vertex $v \in V(K_n)$, we may assume by possibly relabelling the colours that this rainbow $K_4$ is formed by choosing a vertex from each of the four sets $\Vberg$, $\Vr$, $\Vg$, and $\Vb$. Recall that $V_{ijk}$ is the set of vertices incident with exactly the colours $i$, $j$, and $k$. In fact, the colours of all edges between these four sets are forced and they must form a (possibly unbalanced) blow-up of a rainbow $K_4$ (see \cref{fig:4setstruct} for reference).   
    
    Set $\Lambda \coloneqq |\Vberg| + |\Vr| + |\Vg| + |\Vb|$. Then, we have the following claim.

    \begin{claim}
    \label{clm:rgbineq}
    For each index $A \in \{{\rm RGB}, {\rm RDW}, {\rm GDY}, {\rm BWY}\}$, we have
    $$2|V_A| <  \Lambda - n/4 .$$
    \end{claim}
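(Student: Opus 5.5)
By the symmetry of the blow-up of the rainbow $K_4$ (\cref{fig:4setstruct}), it suffices to treat $A = {\rm RGB}$; the other three cases follow by relabelling. Fix one vertex from each part, $v \in \Vberg$, $u \in \Vr$, $z \in \Vg$ and $w \in \Vb$. These four vertices span a rainbow $K_4$ with $\chi(vu)=R$, $\chi(vz)=G$, $\chi(vw)=B$, $\chi(uw)=W$, $\chi(wz)=Y$ and $\chi(zu)=D$. The plan is to double count monochromatic triangles sitting on the six edges of this $K_4$, which are at least $6\abund$ by abundance, by recording for each other vertex $x$ how many of these triangles it completes.

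\emph{Local contributions.} Fix $x \notin \{v,u,z,w\}$. The colours of the four edges from $x$ to $v,u,z,w$ partition $\{v,u,z,w\}$ into colour classes. A class of size $3$ would make the $K_4$-triangle on it monochromatic, which is impossible because the $K_4$ is rainbow. Hence every class has size at most $2$, and $x$ completes at most two triangles. Two triangles occur only when the classes form a perfect matching of $\{v,u,z,w\}$, with each pair coloured by the colour of its $K_4$-edge.

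For $x$ in one of the four parts, the three edges from $x$ to the other parts' representatives are forced by the blow-up. Only the edge to the representative of $x$'s own part is free, so $x$ completes at most one triangle. The triangle lies on an edge at that representative:
\begin{itemize}
\item $x \in \Vberg$ completes exactly one triangle, and it lies on one of $vu$, $vz$, $vw$.
\item $x$ in $\Vr$, $\Vg$ or $\Vb$ completes at most one triangle, and it lies on an edge at $u$, $z$ or $w$ respectively.
\end{itemize}

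\emph{Two counts.} For each vertex $x$, let $f(x)$ be the number of triangles it completes on $vu,vz,vw$, and $g(x)$ the number on $uz,zw,wu$. Abundance gives $\sum_x f(x) \ge 3\abund$ and $\sum_x g(x) \ge 3\abund$. By the local analysis, $g$ vanishes on $\Vberg$, is at most $1$ on the other three parts, and $f+g \le 2$ for every vertex outside the four parts.

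The key extra step is to pin down which outside vertices can achieve $f+g=2$. Such a vertex realises a perfect matching: one of $\{vu,zw\}$, $\{vz,uw\}$, $\{vw,uz\}$. Its colour set then contains two prescribed colours, for instance $\{R,Y\}$ in the first case, together with at most one more by \ref{itm:AC2}. I expect to combine this with \cref{lem:pivot_basic_props}, applied to $v$ and its neighbourhood tripartition $N_R(v) \supseteq \Vr$, $N_G(v) \supseteq \Vg$, $N_B(v) \supseteq \Vb$, and with equation~\eqref{eqn:gamma_sum}, to bound how many outside vertices can serve each edge.

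Concretely, I will compare the counts in two regions. The triangles on the rainbow triangle $uzw$ avoid $\Vberg$ entirely. The triangles on the edges at $v$ draw from $\Vberg \cup N_R(v) \cup N_G(v) \cup N_B(v)$ with the sizes $|N_i(v)| = \abund + \gamma_i(v)$. Subtracting the two counts should eliminate the outside vertices and leave
\[
|\Vberg| + \abund \le |\Vr| + |\Vg| + |\Vb| - c
\]
for some $c \ge 1$ coming from the strict slack in~\eqref{eqn:gamma_sum}. Adding $|\Vberg|$ to both sides gives $2|\Vberg| < \Lambda - n/4$, using $\abund \ge n/4 - 3/4$ to absorb the rounding.

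The main obstacle is the outside vertices. They can contribute two triangles each, so the naive counting only bounds $\Lambda$ from above rather than proving the claimed inequality. To handle this I would first try to show, via \cref{lem:technical_lemma}-style counting inside $N_G(v)$ (and its analogues for $R$ and $B$), that every vertex completing a triangle on an edge at $v$ and a triangle on an edge of $uzw$ must already lie in one of the four sets $V_A$. That is, matching-type outside vertices are absorbed into the blow-up by the maximality of the four sets $V_A$. Once this is established, the two counts separate cleanly and yield the claim.
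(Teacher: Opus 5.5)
There is a genuine gap, and it sits exactly at the step you flag as the key one. A vertex $x$ with $f(x)=g(x)=1$ realises a perfect matching, so $\mathcal{C}_x$ contains $\{R,Y\}$, $\{G,W\}$ or $\{B,D\}$. No vertex of $\Vberg\cup\Vr\cup\Vg\cup\Vb$ has such a colour set. These sets are defined by their exact colour sets, so there is no ``maximality'' that could absorb anything, and your own local analysis already shows that vertices of the blow-up complete at most one triangle.

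So the proposed step is really the assertion that no such $x$ exists, and nothing available excludes it. By \ref{itm:P2}, the $Y$-edge $zw$ lies in a transversal monochromatic triangle with some $x\in N_R(v)$. If, say, $\mathcal{C}_x=\{R,G,Y\}$, then $xu\in\mathcal{C}_x\cap\mathcal{C}_u=\{R\}$, so $x$ is precisely such a matching-type vertex.

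Even if outside vertices behaved, your two counts $\sum f\ge 3\abund$ and $\sum g\ge 3\abund$ are both lower bounds. The only upper-bound information the four representatives give is the sizes $|N_i(v)|=\abund+\gamma_i(v)$ and the \ref{itm:AC1} bounds on non-transversal triangles. Combining these is exactly the paper's final step, which yields $\Lambda\le n/2$, an \emph{upper} bound on the blow-up.

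The claim points the other way. It needs a \emph{lower} bound on $|\Vr|+|\Vg|+|\Vb|$ in terms of $|\Vberg|$, i.e.\ many vertices of $N_B(v)$ forced to see both $W$ and $Y$, and analogously for $N_R(v)$ and $N_G(v)$. With only $u\in N_R(v)$ and $z\in N_G(v)$ there is a single edge between these two neighbourhoods, so nothing forces such vertices.

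The missing idea is to enlarge the structure before counting. By \ref{itm:P2}, choose $a'\in N_R(v)$, $b'\in N_G(v)$, $c'\in N_B(v)$ completing monochromatic triangles on $zw$, $uw$, $uz$ respectively. Then $a'zub'$ is a rainbow $\{Y,D,W\}$-coloured path alternating between $N_R(v)$ and $N_G(v)$. Let $L_B\subseteq N_B(v)$ be the set of vertices completing no triangle on an edge of this path. Apply \cref{lem:technical_lemma}, with the roles of the neighbourhoods permuted and $t=|L_B|$. It gives at least $\gamma_B(v)+|L_B|+2$ vertices of $N_B(v)$ completing both a $Y$-triangle on $a'z$ and a $W$-triangle on $ub'$, and these lie in $\Vb$.

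Do the same with the paths $b'wzc'$ and $a'wuc'$ for $\Vr$ and $\Vg$. Every vertex of $N_i(v)\setminus L_i$ sees a colour of $\{D,W,Y\}$, so $\Vberg\setminus\{v\}\subseteq L_R\cup L_G\cup L_B$. Equation~\eqref{eqn:gamma_sum} then gives $|\Vr|+|\Vg|+|\Vb|\ge\abund+|\Vberg|+3$, which is the claim.

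So \cref{lem:technical_lemma} is indeed the right tool. It must be used to \emph{produce} members of the other three parts, not to rule out outside vertices.
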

    \begin{claimproof}
    By the symmetry of the colouring, it suffices to prove the claim for $A = {\rm RGB}$. Consider the vertices $v \in \Vberg$, $a \in \Vr$, $b \in \Vg$, and $c \in \Vb$, that form a rainbow $K_4$ in the colouring~$\chi$. Set $\gamma_i \coloneqq \gamma_i(v)$ for all $i \in \mathcal{C}_v$. For each edge $e \in E(K_n)$, let $M_e$ be the set of vertices that form monochromatic triangles with $e$ that are transversal with respect to $\{N_R(v), N_G(v), N_B(v)\}$. So, for instance, we have $M_{ab} \subseteq N_B(v)$,  $M_{bc} \subseteq N_R(v)$, and $M_{ac} \subseteq N_G(v)$. By the property~\ref{itm:P2}, for any edge $xy \in N_R(v) \times N_G(v)$ having a colour $\chi(xy) \notin \{R,G,B\}$, we have \(|M_{xy}| \geq \gamma_B \geq 1\). Similarly, we have that $|M_{xy}| \geq \gamma_G \geq 1$ if $xy \in N_R(v) \times N_B(v)$ and $|M_{xy}| \geq \gamma_R \geq 1$ if $xy \in N_G(v) \times N_B(v)$. Hence, let $a'$, $b'$, and $c'$ be arbitrarily chosen vertices from the sets $M_{bc}, M_{ac}$, and $M_{ab}$, respectively. See \cref{fig:6-col linkG structure} for an illustration.        % Then, it suffices to show that \[|\Vr| + |\Vg| + |\Vb| \geq |\Vberg| + n - 1 - 3\lfloor n/4 \rfloor\] 
    
    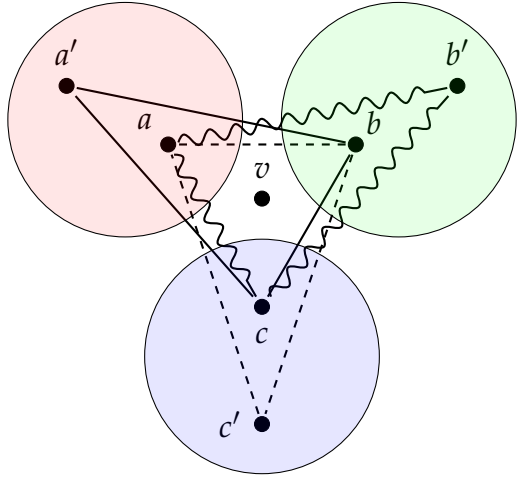
\begin{figure}[t]
\centering
\begin{tikzpicture}[scale = 1.2]

\node (RDW) at (150:1.2) {};
\node (GYD) at (30:1.2) {};
\node (BYW) at (270:1.2) {};
\node (RY) at (150:2.5) {};
\node (GW) at (30:2.5) {};
\node (BD) at (270:2.5) {};
\node (O) at (0,0) {};

\node[above = 1.4mm] at (O) {\large $v$};
\node[above left = 1mm] at (RDW) {\large $a$};
\node[above = 1.4mm] at (RY) {\large $a'$};
\node[above right] at (GYD) {\large $b$};
\node[above = 1.4mm] at (GW) {\large $b'$};
\node[below = 1.4mm] at (BYW) {\large $c$};
\node[left = 1.4mm] at (BD) {\large $c'$};
  
\draw[thick, decorate,decoration={coil,aspect=0}] (RDW) -- (BYW);
\draw[thick, decorate,decoration={coil,aspect=0}] (RDW) -- (GW);
\draw[thick, decorate,decoration={coil,aspect=0}] (BYW) -- (GW);
%\draw[thick, red] (RDW) -- (RY);
%\draw[thick, blue] (BYW) -- (BD);
%\draw[thick, green] (GYD) -- (GW);
\draw[thick, black] (GYD) -- (BYW);
\draw[thick, black] (GYD) -- (RY);
\draw[thick, black] (BYW) -- (RY);
\draw[thick, black, dashed] (GYD) -- (RDW);
\draw[thick, black, dashed] (GYD) -- (BD);
\draw[thick, black, dashed] (BD) -- (RDW);

\node[draw] at (RDW) [circle,fill,inner sep=2pt]{};
\node[draw] at (RY) [circle,fill,inner sep=2pt]{};
\node[draw] at (GYD) [circle,fill,inner sep=2pt]{};
\node[draw] at (GW) [circle,fill,inner sep=2pt]{};
\node[draw] at (BYW) [circle,fill,inner sep=2pt]{};
\node[draw] at (BD) [circle,fill,inner sep=2pt]{};
\node[draw] at (O) [circle,fill,inner sep=2pt]{};

\filldraw[ fill = red, fill opacity = 0.1] (150:1.75) circle (1.3);
\filldraw[ fill = green, fill opacity = 0.1] (30:1.75) circle (1.3);
\filldraw[ fill = blue, fill opacity = 0.1] (270:1.75) circle (1.3);

\end{tikzpicture}

    \caption{The structure involved in the proof of \cref{lem:UB_colours}.}
    \label{fig:6-col linkG structure}
\end{figure}
    
    First, we obtain lower bounds on $|\Vb|$, $|\Vr|$, and $|\Vg|$. To bound $|\Vb|$ from below, consider the rainbow $\{Y, D, W\}$-coloured path $a' b a b'$, which alternates between the sets~$N_R(v)$ and~$N_G(v)$. Let $L_B$ be the set of vertices in $N_B(v)$ that do not form monochromatic triangles with any of $a'b$, $ba$, or $ab'$. Thus, an application of \cref{lem:technical_lemma} to $\mathcal{P} = a'bab'$ and with $t = |L_B|$ returns a set of at least $\gamma_B + |L_B| + 2$ vertices in $N_B(v)$ that form monochromatic triangles with both $a'b$ and $ab'$. All these vertices must belong to $\Vb$, and so $|\Vb| \geq \gamma_B + |L_B| + 2$.

    Next, we repeat the above argument with respect to the sets $\Vr$ and $\Vg$. This tells us that $|\Vr| \geq \gamma_R + |L_R| + 2$, where $L_R$ is the set of vertices in $N_R(v)$ not forming monochromatic triangles with any of $b'c, cb,$ or $bc'$. Similarly, $|\Vg| \geq \gamma_G + |L_G| + 2$, where $L_G$ consists of the vertices in $N_G(v)$ not forming monochromatic triangles with $a'c, ca,$ or $ac'$.

    Finally, observe that we have $\Vberg \cap N_R(v) \subseteq L_R$, as any vertex in $N_R(v) \setminus L_R$ forms a monochromatic triangle with an edge among $b'c, cb, $ and $bc'$ and is thus incident with a colour in $\{D,W,Y\}$. Analogously, we have $\Vberg \cap N_G(v) \subseteq L_G$ and $\Vberg \cap N_B(v) \subseteq L_B$. Combining these observations yields $\Vberg \setminus \{v\}  \subseteq L_R \cup L_G \cup L_B$. Thus, we have
\begin{eqnarray*}
    |\Vr| + |\Vg| + |\Vb| &\geq& \gamma_R + \gamma_G+ \gamma_B + |L_R| + |L_B| + |L_G| + 6 \\ &\geq& \abund -2 + (|\Vberg|-1) + 6 \\ &=& \abund + |\Vberg| + 3 \\ 
    &>& n/4 + |\Vberg|,
\end{eqnarray*}
where the second inequality uses the above observation along with equation~\eqref{eqn:gamma_sum}. The claim now follows by adding $|\Vberg|$ on both sides of the above inequality and rearranging its terms.
\end{claimproof}

Similarly to the proof of \cref{clm:rgbineq}, choose vertices $v \in \Vberg$, $a \in \Vr$, $b \in \Vg$, and $c \in \Vb$ (as in \cref{fig:6-col linkG structure}). Let $\gamma_i \coloneqq \gamma_i(v)$ and for each edge $e$, write $M_e$ to denote the set of vertices forming monochromatic triangles with $e$ that are transversal with respect to $\{N_R(v), N_G(v), N_B(v)\}$. 

Observe that $abc$ is a rainbow triangle and so no vertex forms monochromatic triangles with two or more of its edges. This implies that there are at least $3\abund$ vertices forming a monochromatic triangle with an edge of $abc$. On the other hand, the number of vertices in $N_R(v)$ forming monochromatic triangles with some edge of $abc$ that are \emph{not} transversal (relative to $\{N_i(v)\}_{i \in \{R,G,B\}}$) is at most $\deg_W\bigl(a; N_R(v)\bigr) + \deg_D\bigl(a; N_R(v)\bigr) \leq \gamma_R - 1$ by \ref{itm:AC1}. Similarly, there are at most $\gamma_G - 1$ such vertices in $N_G(v)$ and at most $\gamma_B -1$ in $N_B(v)$. Therefore, we have 
\[\bigl|M_{ab} \cup M_{bc} \cup M_{ca}\bigr| \geq 3\abund - \gamma_R - \gamma_G - \gamma_B + 3 \geq 2 \abund + 2 \geq n/2,\]
where the second inequality follows from equation~\eqref{eqn:gamma_sum}. 

Note that every vertex of $M_{ab}$ is incident with the colours $B$ and $D$; every vertex of $M_{bc}$ with $R$ and $Y$; and every vertex of $M_{ca}$ with $G$ and $W$. Hence, $M_{ab}$, $M_{bc}$, and $M_{ca}$ are disjoint from each of the sets $\Vberg$, $\Vr$, $\Vg$, and $\Vb$. Together with the above inequality, this implies that $\Lambda = |\Vberg| + |\Vr| + |\Vg| + |\Vb| \leq n/2$. However, this contradicts \cref{clm:rgbineq}, as adding the four inequalities returned by the claim for each index~$A \in \{{\rm RGB}, {\rm RWD}, {\rm GYD}, {\rm BYW}\}$ yields $2\Lambda < 4\Lambda - n$, which rearranges to $\Lambda > n/2$. Therefore, by contradiction, any $\abund$-abundant edge-colouring of $K_n$ uses at most $5$ colours. \end{proof}

\section{Characterizing small configurations}\label{sec:rconfigs}

In this section we prove \cref{lem:config_classif}, which captures the key properties of $r$-configurations for $r\in \{4,5,6\}$ needed in our proof of \cref{thm:main_colouring}.

\begin{proof}[Proof of \cref{lem:config_classif}\ref{lem:config_classif:1}] Let $\chi: E(K_4^{(3)}) \to \mathbb{N}$ be a $4$-configuration. Recall that by \cref{defn:rconfig}, the colouring $\chi$ is an edge colouring of $K_4^{(3)}$ with no spanning colour. As any pair of edges in $K_4^{(3)}$ span all $4$ vertices, the colouring $\chi$ must be rainbow, as required.
\end{proof}

\begin{proof}[Proof of \cref{lem:config_classif}\ref{lem:config_classif:2}]
Let $\chi: E(K_5^{(3)}) \to \mathbb{N}$ be a $5$-configuration on the vertex set $S = \{v_i\}_{i\in [5]}$. For each colour $j \in \mathbb{N}$, let $H_j \coloneqq H_j^{\chi}$ denote the $j$-coloured subgraph in $\chi$. By \cref{defn:rconfig}, the induced colouring $\chi[S \setminus \{v_i\}]$ has a spanning colour that is not incident with the vertex $v_i$, for each $i \in [5]$. By possibly relabelling the colours, we may assume that this colour is $i$. So, for each colour~$i \in [5]$, the subgraph~$H_i$ spans $4$ vertices, and hence we have $e(H_i) \geq 2$. However as $K_5^{(3)}$ has ten edges, we have that $\sum_{i \in \mathbb{N}} e(H_i) = 10$. Combining these facts, we see that $e(H_i) = 2$ for each $i \in [5]$ and $e(H_i) = 0$ for $i\ge 6$, thereby proving the statement. 
\end{proof}

To prove \cref{lem:config_classif}\ref{lem:config_classif:3}, we will show that in any $6$-configuration, every colour has to appear at least $3$ and at most $5$ times.
We will then show that if such a configuration $\chi$ contains a monochromatic copy of $K_4^{(3)}$, then either $\chi$ contains a rainbow $K_4^{(3)}$, contradicting \ref{config:2}, or $\chi$ contains a pair of vertices lying in four distinctly coloured edges.

\begin{obs}\label{obs:r-1verticesineachcolour}
    If an $r$-configuration uses precisely $r$ colours, then every colour spans exactly $r-1$ vertices.
\end{obs}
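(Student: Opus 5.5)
The plan is to combine two facts: every colour of an $r$-configuration misses at least one vertex, and the colours spanning the vertex-deleted subcolourings are all distinct.

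Let $\chi$ be an $r$-configuration on vertex set $S$ with $|S|=r$ that uses exactly $r$ colours. For each $v\in S$, property \ref{config:2} applied to $S\setminus\{v\}$ gives a colour $c_v$ that is spanning in $\chi[S\setminus\{v\}]$. In particular, every vertex of $S\setminus\{v\}$ is incident with colour $c_v$.

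\textbf{Step 1: the map $v\mapsto c_v$ is injective.} If $c_v=c_w$ for distinct $v,w$, then the vertices incident with this colour include $(S\setminus\{v\})\cup(S\setminus\{w\})=S$. The colour would then be spanning in $\chi$, contradicting \ref{config:1}. Since there are $r$ vertices and only $r$ colours, the map is a bijection onto the colour set. Hence every colour used by $\chi$ equals $c_v$ for some $v$, and so it is incident with every vertex of $S\setminus\{v\}$, at least $r-1$ vertices.

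\textbf{Step 2: no colour is incident with all $r$ vertices.} This is exactly property \ref{config:1}. Combined with Step 1, each colour spans exactly $r-1$ vertices, namely all of $S$ except the unique $v$ with $c_v$ equal to that colour.

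There is no real obstacle here. The only subtlety is reading ``spans'' as the set of vertices incident with edges of that colour. One should also note that a colour spanning $S\setminus\{v\}$ in $\chi[S\setminus\{v\}]$ does not touch $v$ there, although it might a priori touch $v$ through edges containing $v$. That is harmless: such a colour misses some vertex of $S$ by \ref{config:1}, and Step 1 already forces the count to be at least $r-1$, so the count is exactly $r-1$ and the missed vertex must be $v$.
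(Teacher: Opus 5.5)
Your proof is correct and follows the paper's argument. You obtain $c_v$ from \ref{config:2}, use \ref{config:1} to show $v \mapsto c_v$ is injective and hence a bijection onto the $r$ colours, and apply \ref{config:1} once more to cap each colour at $r-1$ vertices; this last step is where you are more explicit than the paper about why $c_v$ cannot also touch $v$.
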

\begin{proof}
Let $V \coloneqq V(K_r^{(3)})$. By property \ref{config:2}, for each $v \in V$, $\chi[V \setminus \{v\}]$ has a spanning colour $c_v \in [r]$. By property \ref{config:1}, $c_v \neq c_{v'}$ for any distinct $v, v' \in V$. This shows that there are at least $|V| = r$ distinct colours spanning exactly $r-1$ vertices, thereby proving the observation.   
\end{proof}

\begin{lem}\label{lem:atleast3edges}
If a $6$-configuration uses $6$ colours, then every colour class uses at least $3$ edges.
\end{lem}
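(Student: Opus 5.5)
The plan is to argue by contradiction, using \cref{obs:r-1verticesineachcolour} to pin down the shape of a small colour class. Let $\chi$ be a $6$-configuration on vertex set $S$ that uses exactly $6$ colours. By \cref{obs:r-1verticesineachcolour}, each colour $i$ spans exactly $5$ vertices, say $S\setminus\{v_i\}$. By \ref{config:1}, the $v_i$ are pairwise distinct, so $S=\{v_1,\dots,v_6\}$. Now suppose some colour class, say colour $1$, has at most $2$ edges. One triple spans only $3$ vertices, and two triples span $5$ vertices only if they meet in exactly one vertex. Hence $H_1$ consists of exactly two edges $e=xab$ and $f=xcd$, and $S=\{x,a,b,c,d,v_1\}$. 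The goal is to show that this rigid local structure is incompatible with \ref{config:2}.

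\textbf{Step 1: $4$-subsets force monochromatic pairs.} A $4$-set $Q\subseteq S$ has a spanning colour exactly when two of its four triples share a colour. Any two triples in $Q$ meet in two vertices. Moreover, each same-coloured pair of triples meeting in two vertices lies in a unique $4$-set. So \ref{config:2} on $4$-sets says that the $15$ four-subsets of $S$ are covered by ``adjacent monochromatic pairs''. Colour $1$ contributes no such pair, since $|e\cap f|=1$. In addition, colour $j$ cannot serve a $4$-set containing $v_j$, since every $j$-edge lies in $S\setminus\{v_j\}$.

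\textbf{Step 2: exploit the $5$-subsets through $x$.} Consider $S\setminus\{x\}$. Colour $1$ has no edge there, since both of its edges contain $x$. So its spanning colour $j$ spans all $5$ vertices of $S\setminus\{x\}$. As colour $j$ spans exactly $5$ vertices of $S$, we get $v_j=x$, so colour $j$ never touches $x$. Next apply the same reasoning to $S\setminus\{y\}$ for $y\in\{a,b,c,d\}$. There colour $1$ has a single edge, so it does not span, and the spanning colour $j_y$ must avoid $y$, i.e.\ $v_{j_y}=y$. After relabelling, the remaining colours are then $j=v^{-1}(x)$ together with colours $2',\dots$ attached to $a,b,c,d$. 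Finally, $v_1$ is the missing vertex of colour $1$.

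\textbf{Step 3: the contradiction.} Now look at the $4$-sets that contain $x$ together with two vertices from opposite sides, such as $\{x,a,c,v_1\}$ or $\{x,a,c,b\}$. For each such set, Step 1 restricts the admissible colours to those $j$ whose missing vertex is outside the set; these are typically only one or two colours. Each admissible colour must then have two adjacent edges inside the set, and I would track which triples this forces. In particular, the triples $xac$, $xad$, $xbc$, $xbd$, $av_1c,\dots$ cannot have colour $1$ or colour $v^{-1}(x)$. Collecting these forced equalities should produce either a rainbow $4$-set, which contradicts \ref{config:2}, or a colour whose span misses two vertices, which contradicts \cref{obs:r-1verticesineachcolour}. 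As a backup, I would use a global count: $20$ edges, with $18$ of them in five colours, must supply at least $15$ adjacent monochromatic pairs subject to the exclusions above, and this count could be checked against the maximum number of such pairs available.

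The main obstacle is Step 3. It is a finite case analysis, and the difficulty is organising it so that it is genuinely short. I would first use the symmetry swapping $e$ and $f$ and the symmetries $a\leftrightarrow b$ and $c\leftrightarrow d$ to reduce to a few $4$-sets containing $x$, and only fall back on the counting bound if the local forcing does not close.
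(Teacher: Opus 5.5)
\textbf{There is a genuine gap: the decisive step is never carried out.} Your setup is right and matches the paper. By \cref{obs:r-1verticesineachcolour} each colour misses exactly one vertex. The small class must be two triples $xab$, $xcd$ meeting in a single vertex. By \ref{config:2} no $4$-set is rainbow, so a colour $j$ can supply the repeated pair of a $4$-set $Q$ only if $v_j\notin Q$.

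The proposal stops exactly where the proof begins:
\begin{itemize}
\item Step 2 yields nothing new. It only re-derives that each of $x,a,b,c,d$ is the missing vertex of some colour other than $1$, which is already the labelling from the Observation.
\item Step 3 holds the whole argument, and it is only asserted (``collecting these forced equalities should produce\ldots''). You do not say which $4$-sets to use or check that the forcing closes.
\item The $4$-sets you single out, those containing $v_1$, are the less constrained ones. Their complement lies in $\{x,a,b,c,d\}$, so two colours are admissible.
\item The counting fallback does not close by itself. Each of the five other colours lives in a $5$-set, so it can rescue at most five $4$-sets, and $5\cdot 5=25\ge 15$ gives no contradiction.
\end{itemize}

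\textbf{The missing piece is a short forcing chain inside the span $\{x,a,b,c,d\}$ of colour $1$.} Write $c_y$ for the unique colour missing $y$.
\begin{enumerate}
\item Since $xac$ is not a colour-$1$ edge, $\chi(xac)\in\{c_b,c_d\}$. By the symmetry swapping $(a,b)$ with $(c,d)$, you may take $\chi(xac)=c_b$.
\item In $\{x,a,b,c\}$, the triples $xbc$ and $abc$ both avoid $1$ and $c_b$. Non-rainbowness forces them to be equal, and the only colour available to both is $c_d$.
\item Next, $\{x,b,c,d\}$ forces $\chi(xbd)=\chi(bcd)=c_a$.
\item Then $\{a,b,c,d\}$ forces $\chi(abd)=\chi(acd)=c_x$.
\item Finally, $\{x,a,b,d\}$ has the distinct colours $1$, $c_a$, $c_x$ on $xab$, $xbd$, $abd$. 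The triple $xad$ can take none of them: it contains $x$ and $a$ and is not a colour-$1$ edge. So this $4$-set is rainbow, contradicting \ref{config:2}.
\end{enumerate}
This is exactly the paper's proof. It uses only four of the $4$-subsets of $\{x,a,b,c,d\}$, which are the most constrained ones since their complement contains $v_1$. It never needs $v_1$ or a global count.
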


\begin{proof}
    Let $\chi : E(K_6^{(3)}) \to [6]$ be a $6$-configuration using precisely $6$ colours. By \cref{obs:r-1verticesineachcolour}, every colour spans exactly $5$ vertices. Then we may label the vertices in $V(K_6^{(3)})$ as $v_1, \dots, v_6$ so that, for each $i \in [6]$, $v_i$ is the (unique) vertex not incident with any $i$-coloured edges.
    
    Note that each colour in $[6]$ uses at least two edges since it spans $5$ vertices. Assume for a contradiction that one of the colours, say colour $6$, only uses two edges. By relabelling the colours and vertices if necessary, we may assume that the $6$-coloured edges are $v_1v_2v_3$ and $v_1v_4v_5$. The triple $v_1v_2v_4$ cannot use a colour in $\{1,2,4,6\}$ and so, by relabelling $2 \leftrightarrow 4$ and $3 \leftrightarrow 5$ if necessary, we may assume that $\chi(v_1v_2v_4) = 3$. By property \ref{config:2}, $v_1v_2v_3v_4$ does not form a rainbow $K_4^{(3)}$. However, $\chi(v_1 v_3 v_4), \chi(v_2v_3v_4) \notin \{3,6\}$ and so $\chi(v_1 v_3v_4) = \chi(v_2v_3v_4)$. This forces $\chi(v_1v_3v_4) = \chi(v_2v_3v_4) = 5$. 

    Note that $\chi(v_1v_3v_5), \chi(v_3 v_4 v_5) \notin \{5,6\}$. As the quadruple $v_1 v_3v_4 v_5$ also does not induce a rainbow $K_4^{(3)}$, we must then have $\chi(v_1v_3v_5) = \chi(v_3v_4v_5) = 2$. Similarly, the quadruple $v_2v_3v_4v_5$ not being rainbow, together with $\chi(v_3v_4v_5) = 2$ and $\chi(v_2v_3v_4) = 5$, implies $\chi(v_2v_3v_5) = \chi(v_2v_4v_5) = 1$.

    Finally, consider the quadruple $v_1v_2v_3 v_5$, whose edges satisfy $\chi(v_1v_2v_3) = 6, \chi(v_1v_3v_5) = 2,$ and $ \chi(v_2v_3v_5) = 1$. Observe that $\chi(v_1v_2v_5) \notin \{1,2,6\}$, and so $v_1v_2v_3v_5$ induces a rainbow $K_4^{(3)}$, contradicting \ref{config:2}.
\end{proof}

\begin{proof}[Proof of \cref{lem:config_classif}\ref{lem:config_classif:3}] Let $\chi: E(K_6^{(3)}) \to [6]$ be a $6$-configuration using precisely $6$ colours. For each $i \in [6]$, let $H_i \coloneqq H_i^{\chi}$ be its $i$th colour class. Let $V \coloneqq V(K_6 ^{(3)})$. By \cref{obs:r-1verticesineachcolour}, we may label the vertices of $V$ as $v_1, \dots, v_6$ so that $v_i$ is the unique vertex not incident with colour $i$.

By relabelling the colours if necessary, we may assume that $e(H_1) \leq \dots \leq e(H_6)$.
By \cref{lem:atleast3edges}, we have $e(H_i) \geq 3$ for each $i \in [6]$.
Since $\sum_{i \in [6]} e(H_i)=e(K_6^{(3)}) = \binom{6}{3} = 20$, we therefore also have $e(H_i) \leq 5$ for each $i \in [6]$, as required by the first part of the lemma. 

To show the second part, let us assume for a contradiction that 
\begin{enumerate}[label = (\roman*)]
    \item\label{nopairin4colours} $\chi$ has no pair of vertices contained in edges of $4$ distinct colours, and 
    \item\label{monoK43} $\chi$ contains a monochromatic $K_4^{(3)}$.
\end{enumerate}
Note that the monochromatic $K_4^{(3)}$ has $4$ edges but only spans $4$ vertices. As the colour class it is contained in spans $5$ vertices, it must contain at least $5$ edges. This shows that $e(H_6)=5$.
It follows that $e(H_i)=3$ for each $i\in [5]$ since $e(K_6^{(3)}) = 20$.
Moreover, the monochromatic $K_4^{(3)}$ is a subgraph of $H_6$ since it has four edges.
We may assume, using the symmetry of the $K_4^{(3)}$ and relabelling the colours and vertices if necessary, that $H_6$ consists precisely of the four triples contained in the quadruple $v_1v_2v_3v_4$, together with the triple $v_3v_4v_5$.

For each $i \in [6]$, let $\phi_i$ be the number of pairs $ab \in \binom{V}{2}$ that are contained in at least two $i$-coloured edges. Note that each pair $ab \in \binom{V}{2}$ is contained in precisely four edges, and these four edges use at most three distinct colours by \ref{nopairin4colours}. Thus, 
\[\sum_{i \in [6]} \phi_i \geq \binom{6}{2} = 15.\]
Now, for each $i \in [6]$, let $\Phi_i$ be the number of quadruples $abcd \in \binom{V}{4}$ such that the induced subcolouring $\chi[abcd]$ uses at least two $i$-coloured edges. As $\chi$ contains no rainbow $K_4^{(3)}$ by \ref{config:2}, any quadruple contains at least two edges of the same colour. Thus, 
\[\sum_{i \in [6]} \Phi_i \geq \binom{6}{4} = 15.\]
Inspecting the $3$-graph $H_6$, we see that $\phi_6 = 6$ as the pairs of vertices contained in more than one $6$-coloured edge are those included in $v_1v_2v_3v_4$. Moreover, the quadruples containing more than one $6$-coloured edge are $v_1v_2v_3v_4, v_1v_3v_4v_5,$ and $v_2v_3v_4v_5$, and so $\Phi_6 = 3$.

Let $i\in [5]$.
Then $e(H_{i}) = 3$, hence $\Phi_i\leq \binom{3}{2}=3$ since any two distinct triples belong to at most one quadruple.
Relabelling if necessary, we may assume $\Phi_1<\cdots <\Phi_5$.
Observe that $\sum_{i \in [5]} \Phi_i \geq 15 - 3 = 12$.
It follows that $\Phi_{4} = \Phi_{5}=3$.
Moreover, if $\Phi_i=3$ for some $i$, then $H_{i}$ is isomorphic to the $3$-graph on vertex set $\{a, b,c,d,e\}$ and edges $abc,abd,abe$, in which case $\phi_i=1$.
Therefore $\phi_4 = \phi_5=1$.
So $\sum_{i \in [3]} \phi_i \geq 15 - 6 - 1 - 1 = 7$, meaning there is some $j\in [3]$ such that $\phi_{j}\geq 3$. Since two distinct edges cannot intersect at more than one pair of vertices, the three edges in $H_j$ must pairwise intersect in two vertices.
This means that $H_{j}$ spans four vertices, thereby giving a contradiction.
\end{proof}

\section{Concluding remarks}\label{sec:conclud}

In this paper, we determined the minimum codegree threshold for the appearance of a spanning tight component in a $4$-uniform hypergraph. We achieved this by reducing the problem to finding a spanning colour in an edge-colouring of $K_n^{(3)}$ in which each edge lies in many monochromatic~$K_4^{(3)}$s. To solve this problem, we bounded the number of colours appearing in the colouring, which allowed us to find a very small $r$-configuration. Using the structural properties of this $r$-configuration, together with the existence of vertices forming monochromatic $K_4^{(3)}$s with many of its edges, we derived a contradiction. Our work suggests several interesting questions, discussed below. 

\paragraph{Higher uniformities.} The most prominent open problem is to attack \cref{conj:main} for higher uniformities; that is, to determine the minimum codegree threshold for spanning tight components in $k$-graphs with $k \geq 5$. Our reformulation extends to this setting, where it concerns edge-colourings of $K_n^{(k-1)}$ in which each edge lies in $n/k$ monochromatic $K_k^{(k-1)}$s. Alhough in principle our strategy for \cref{thm:main_colouring} might extend too, several obstacles would need to be overcome.

To bound the total number of colours, we used a `colour-merging' procedure to ensure that each vertex is incident with all but one colour, and then proved that every vertex sees at most $5$ distinct colours. Moreover, various steps in the proof used the fact that every pair of vertices lies in edges of at most $3$ distinct colours. Hence, answering the following question seems a natural first step towards \cref{conj:main}. 

\begin{prob}
    Let $\chi$ be an edge-colouring of $K_n^{(k-1)}$ in which each edge lies in $n/k$ monochromatic $K_k^{(k-1)}$s. For each $1 \leq d \leq k-2$, what is the maximum, over all $d$-tuples $S \subseteq V(K_n^{(k-1)})$, of the number of edges of distinct colours containing $S$? 
\end{prob}

Equivalently, this asks for an upper bound on the number of colours appearing in the coloured link graphs of $d$-tuples, which are defined in the obvious way. For $d = k-2$, an upper bound of $k-1$ follows from the same argument used for \cref{obs:basic_facts}\ref{basic_obs:fact2}. When $d = k-3$, the coloured link graph of a $d$-tuple is a $n/k$-abundant edge-colouring of $K_{n-k+3}$. This leads to the following question, which may be of independent interest. 

\begin{prob}
    For each $t \geq 1$, determine the infimum $\alpha = \alpha(t) \geq0$ with the following property. For all $\alpha' > \alpha$ and $n$ sufficiently large, every $\alpha'n$-abundant colouring of $K_n$ uses at most $t$ colours.
\end{prob}

It is not hard to see that $\alpha(1) = 1/2$ and $\alpha(2) = \alpha(3) = 1/3$. Note that \cref{lem:UB_colours} implies that $\alpha(5) \leq 1/4$. Somewhat surprisingly, this bound is tight. In particular, for all $m \geq 1$ there exists a construction of a $(8m+1)$-abundant edge-colouring of the complete graph on $32m+7$ vertices using $6$ colours---see \cref{fig:extremal} and its caption. We make no conjecture about the value of $\alpha(t)$ for~$r \geq 6$.

Finally, $r$-configurations generalize naturally to higher uniformities as vertex-minimal structures with no spanning colour. In our proof, the bound on the number of colours allowed us to control the size of the $r$-configuration, which could then be characterised precisely because it was small. It would be interesting to characterise $r$-configurations without size restrictions, as this might allow one to prove \cref{conj:main} by contradiction without precise control on the number of colours. Note that if every edge lies in $n/k$ monochromatic $K_k^{(k-1)}$s, a rough bound of $O(k^{k})$ on the total number of colours is easy to obtain by counting the edges in each colour. It is unclear, however, whether this helps in proving the conjecture.

\begin{figure}[t]
\begin{center}
\begin{tikzpicture}
            \coordinate (1) at (90:2.5cm);
            \coordinate (2) at (90+ 360/7:2.5cm);
            \coordinate (3) at (90+ 2*360/7:2.5cm);
            \coordinate (4) at (90+ 3*360/7:2.5cm);
            \coordinate (5) at (90+ 4*360/7:2.5cm);
            \coordinate (6) at (90+ 5*360/7:2.5cm);
            \coordinate (7) at (90+ 6*360/7:2.5cm);
            
            \node[above=4mm] at (1) { $$8m+1$$};
            \node[left=4mm] at (2) {$$4m+1$$};
            \node[left=4mm] at (3) { $$4m+1$$};
            \node[below=4mm] at (4) {$$4m+1$$};
            \node[below=4mm] at (5) {$$4m+1$$};
            \node[right=4mm] at (6) {$$4m+1$$};
            \node[right=4mm] at (7) {$$4m+1$$};
            
            \draw[very thick, blue] (1) -- (2);
            \draw[very thick, blue] (1) -- (3);
            \draw[very thick, red] (1) -- (4);
            \draw[very thick, red] (1) -- (5);
            \draw[very thick, red] (1) -- (6);
            \draw[very thick, red] (1) -- (7);
            \draw[very thick, blue] (2) -- (3);
            \draw[very thick, black] (2) -- (4);
            \draw[very thick, yellow] (2) -- (5);
            \draw[very thick, black] (2) -- (6);
            \draw[very thick, yellow] (2) -- (7);
            \draw[very thick, green] (3) -- (4);
            \draw[very thick, green] (3) -- (5);
            \draw[very thick, cyan] (3) -- (6);
            \draw[very thick, cyan] (3) -- (7);
            \draw[very thick, green] (5) -- (4);
            \draw[very thick, black] (6) -- (4);
            \draw[very thick, red] (7) -- (4);
            \draw[very thick, red] (5) -- (6);
            \draw[very thick, yellow] (5) -- (7);
            \draw[very thick, cyan ] (6) -- (7);
            
            \foreach \n in {1,2,3,4,5,6,7}{
            \draw[ fill = white] (\n) circle(0.4);
            }

            \tkzDrawSector[R,fill=red, fill opacity = 0.6](1,0.4)(90,-90);
            \tkzDrawSector[R,fill=blue, fill opacity = 0.6](1,0.4)(-90,-270);
            \tkzDrawSector[R,fill=black, fill opacity = 0.6](2,0.4)(90,-90);
            \tkzDrawSector[R,fill=yellow, fill opacity = 0.6](2,0.4)(-90,-270);
            \tkzDrawSector[R,fill=cyan, fill opacity = 0.6](3,0.4)(90,-90);
            \tkzDrawSector[R,fill=green, fill opacity = 0.6](3,0.4)(-90,-270);
            \tkzDrawSector[R,fill=green,fill opacity = 0.6](4,0.4)(90,-90);
            \tkzDrawSector[R,fill=black, fill opacity = 0.6](4,0.4)(-90,-270);
            \tkzDrawSector[R,fill=green, fill opacity = 0.6](5,0.4)(90,-90);
            \tkzDrawSector[R,fill=yellow, fill opacity = 0.6](5,0.4)(-90,-270);
            \tkzDrawSector[R,fill=cyan, fill opacity = 0.6](6,0.4)(90,-90);
            \tkzDrawSector[R,fill=black, fill opacity = 0.6](6,0.4)(-90,-270);
            \tkzDrawSector[R,fill=yellow, fill opacity = 0.6](7,0.4)(90,-90);
            \tkzDrawSector[R,fill=cyan, fill opacity = 0.6](7,0.4)(-90,-270);
            
\end{tikzpicture}
    \caption{An example of a $(8m+1)$-abundant colouring of $K_{32m+7}$ using $6$ colours. The vertex set is partitioned into seven clusters, one of size $8m+1$ and the others of size $4m+1$. Inside each cluster of size $4m+1$, the two colours depicted in the figure are used, each inducing a $2m$-regular graph spanning the cluster; the same applies for the larger cluster, except that in this case the colour classes are $4m$-regular. The edges between the clusters behave according to the diagram.}
    \label{fig:extremal}
\end{center}
\end{figure}
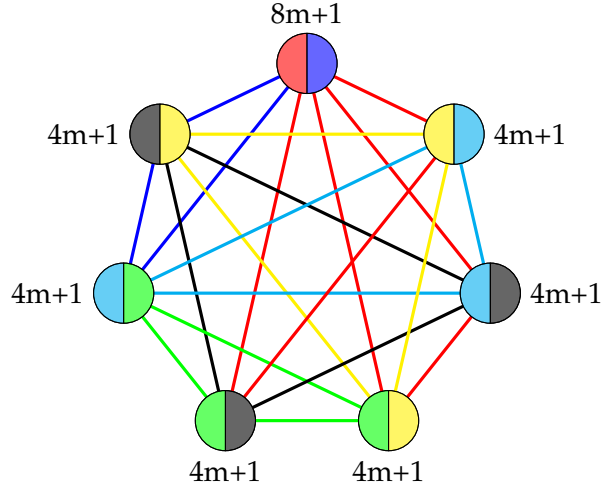

\paragraph{Spheres and large components.} Recall that Georgakopoulos, Haslegrave, Montgomery, and Narayanan \cite{spanningsurfaces} conjectured that $n/k$ is in not only the minimum codegree threshold in $k$-graphs for spanning tight components, but also for spanning $(k-1)$-spheres. They proved this asymptotically for $k=3$. It seems plausible that our methods and results could be used to make progress on this conjecture for $k=4$, even in the case of minimum codegree $n/4 + o(n)$, where one expects the spanning component to be robustly connected.

In a different direction, Georgakopoulos, Haslegrave, and Montgomery \cite{forcinglargetight} considered the problem of determining, in $3$-graphs, which minimum codegree forces a tight component of any prescribed size. They uncovered an interesting phenomenon: the resulting threshold function exhibits (possibly infinitely) many discontinuities. A similar behaviour was observed by Allen, B\"ottcher, and Hladk\'y~\cite{turanposa} when studying which minimum degree in a graph forces a square of a path/cycle of any given size (see their paper for the definition). This was later extended by Hng~\cite{powerspaths} to higher powers of paths/cycles. A connection between these problems is to be expected, since the triangles inside the square of a path form a tightly connected $3$-graph. It would be natural to investigate whether analogous behaviour occurs for tight components in $4$-graphs and, relatedly, for powers of tight paths/cycles in $3$-graphs.

\paragraph{Acknowledgment.} The work leading up to this paper began at the Staycation Workshop hosted by LSE in April 2025. We are grateful to Leo Versteegen for organizing the event.

%%%%%%%%%%%%%%%%%%%%%%%%%%%%%%%%%%%%

{
\fontsize{11pt}{12pt}
\selectfont
	
\hypersetup{linkcolor={red!70!black}}
\setlength{\parskip}{2pt plus 0.3ex minus 0.3ex}

\newcommand{\etalchar}[1]{$^{#1}$}
}

\end{document}